\definecolor{webgreen}{rgb}{0,.5,0}
\definecolor{webbrown}{rgb}{.6,0,0}
\theoremstyle{plain}
\newtheorem{theorem}{Theorem}
\newtheorem{corollary}[theorem]{Corollary}
\newtheorem{lemma}[theorem]{Lemma}
\newtheorem{proposition}[theorem]{Proposition}
\theoremstyle{definition}
\newtheorem{definition}[theorem]{Definition}
\theoremstyle{remark}
\newcommand{\floor}[1]{\left\lfloor#1\right\rfloor}
\newcommand{\ceil}[1]{\left\lceil#1\right\rceil}
\renewcommand{\Function}[2]{%
	\csname ALG@cmd@\ALG@L @Function\endcsname{#1}{#2}%
	\def\curfunc{#1}%
}
\newcommand{\funclabel}[1]{%
	\@bsphack
	\protected@write\@auxout{}{%
		\string\newlabel{#1}{{\curfunc}{\thepage}}%
	}%
	\@esphack
}
\begin{document}
	\author{Laura Monroe}
	\affil{Ultrascale Systems Research Center, Los Alamos National Laboratory, Los Alamos, NM 87501
		\newline
		lmonroe@lanl.gov}
	\renewcommand\Affilfont{\itshape\small}
	\renewcommand\footnotemark{}
	\thanks{		
		This publication has been assigned the LANL identifier LA-UR-21-25560. 
		
		\hspace*{0.82em}This work has been authored by an employee of Triad National Security, LLC, operator of the Los Alamos National Laboratory under Contract No.89233218CNA000001 with the U.S. Department of Energy. This work was also supported by LANL's Ultrascale Systems Research Center at the New Mexico Consortium (Contract No. DE-FC02-06ER25750).).
		The United States Government retains and the publisher, by accepting this work for publication, acknowledges that the United States Government retains a nonexclusive, paid-up, irrevocable, world-wide license to publish or reproduce this work, or allow others to do so for United States Government purposes.  
	} 
	\date{\vspace{-5ex}}
	\title{Binary Signed-Digit Integers and the Stern Polynomial
	}
	
	\maketitle

\begin{abstract}
	The binary signed-digit representation of integers is used for efficient computation in various settings. The Stern polynomial is a polynomial extension of the well-studied Stern diatomic sequence, and has itself has been investigated in some depth. 
	In this paper, we show previously unknown connections between BSD representations and the Stern polynomial.
	
	We derive a weight-distribution theorem for $i$-bit BSD representations of an integer $n$ in terms of the coefficients and degrees of the terms of the Stern polynomial of $2^i-n$. 
	
	We then show new recursions on Stern polynomials, and from these and the weight-distribution theorem obtain similar BSD recursions and a fast $\mathcal{O}(n)$ algorithm that calculates the number and number of $0$s of the optimal BSD representations of all of the integers of NAF-bitlength $\log(n)$ at once, which then may be compared.
	\newline\newline	
\textbf{Keywords:} Binary signed-digit representations; hyperbinary representations; non-adjacent form, Stern polynomial.
\newline\newline
\textbf{Mathematics Subject Classification (2010):} 11A63 $\cdot$ 11B83 $\cdot$ 68R01 
\end{abstract}

\section{Introduction}
Integers may be represented in binary signed-digit (BSD) representation, in which each integer is represented in terms of sums or differences of distinct powers of $2$ \cite{shannon50,booth1951,Shallit92aprimer}. 
BSD representations of an integer are not unique; there are an infinite number of such representations for any integer using an arbitrary number of bits, and of course, a finite number for a fixed number of bits. 

The number of such representations of an integer is of interest, as is the number of representations having a maximal number of $0$s, as these optimal representations may be used for efficient calculation~\cite{shannon50,booth1951,avizienis1961signed}, and in many other applications \cite{morain90,egecioglu90,koblitz91}. 
At times, it can be useful to have a choice of 
integers of a given length, each having many optimal representations, or  optimal representations with many $0$s, or both. 

In \cite{monroe21}, a correspondence was shown between the number of BSD representations of an integer and Stern's diatomic sequence. We extend that work here, and examine the relationship of the related Stern polynomial \cite{KLAVZAR200786} to the number of integer BSD representations, and in particular to optimal representations. 

The first result of this paper is a weight-distribution theorem identifying the number of $i$-bit BSD representations of an integer $n$ having weight $i-\ell$ with the coefficient of the $\ell^\text{th}$ term of the Stern polynomial of $2^i -n$. 
This refines the result in \cite{monroe21}, and is the basis for the rest of the paper. 

As a consequence of this theorem, the number of $i$-bit optimal BSD representations of an integer $n$ is the same as the leading coefficient of the Stern polynomial $B_{2^i-n}$, and the number of $0$s in an optimal representation of $n$ is the degree of $B_{2^i-n}$. 

In an effort to express these Stern parameters more directly, we derive a simple recursion on the leading coefficients and degrees of Stern polynomials of integers. 
This recursion depends on a partition of $I_k$, the interval of integers having non-adjacent form (NAF) of bitlength $k$. 
$I_k$ is partitioned into three subintervals $\mathcal{A}_k$, $\mathcal{B}_k$ and $\mathcal{C}_k$,
as shown in Fig.~\ref{fig:partition_Ik}. 
The leading coefficients and degrees of the Stern polynomials of integers in $\mathcal{A}_k$ and $\mathcal{C}_k$ are expressed in terms of those of $I_{k-2}$ and those of $\mathcal{B}_k$ are expressed in terms of those of $I_{k-1}$, in forward and reverse order. This is the second result of the paper.

The weight-distribution theorem is then applied to the recursion on Stern polynomials, and similar recursive formulas based on the partition are established for the number of optimal BSD representations of integers and for the weights of these optimal representations. This is the third result of the paper.

Finally, the simplicity of the recursions  is illustrated with algorithms that calculate the number and number of $0$s of the optimal BSD representations of all integers in $I_k$. These algorithms are $\mathcal{O}(n)$, where $n$ is the maximum integer calculated. Their advantage is that they permit comparison between all integers of a given NAF-length that have weights and number of optimal representations appropriate to the application in question.
\section{The Stern polynomial}\label{section_sternpoly}
 The Stern polynomial of $n$, $B_n(t)$, was introduced by Klavžar et al. in \cite{KLAVZAR200786}. This polynomial is closely related to the Stern diatomic sequence. Among other things, Klavžar et al. show that $B_n(1) = c(n)$, where $c(n)$ is the $n^{\text{th}}$ entry in the Stern sequence, and $B_n(2) = n$.
 \begin{definition}[Stern polynomial]~\cite{KLAVZAR200786}\label{def_stern_poly}
    Let $n \in \mathbb{N}_0$. The \emph{Stern polynomial} $B_n(t)$ is defined as follows:
    	$$
		B_n(t)= 
		\begin{cases}
		0 & \text{if $n=0$,} \\
		1 & \text{if $n=1$,} \\
		t\cdot B_m(t) & \text{if $n=2m$,} \\
		B_m(t)+B_{m+1}(t) & \text{if $n=2m+1$.} 
		\end{cases}
	$$ 
For ease of notation, throughout the rest of this paper we will refer to $B_n(t)$ simply as $B_n$. We refer to the leading coefficient of $B_n$ as $\ell c(B_n)$, and to the degree of $B_n$ as $deg(B_n)$.
\end{definition}
Many papers have addressed aspects of the Stern polynomial. Ulas and Ulas discuss some of its arithmetic properties in \cite{ulas2011} and Ulas continues this investigation in \cite{ulas2012}. Schinzel investigates the factors of Stern polynomials in \cite{Schinzel2011}, and presents a formula for the leading coefficient of $B_n(t)$ in terms of the binary representation of $n$ in \cite{Schinzel2016}. Dilcher and Tomkins continue the investigation into divisibility of Stern polynomials in \cite{dilcher10}.
  
 The Stern polynomial and its associated properties are extremely useful in the study of BSD representations of integers. 
 We discuss these applications throughout the rest of this paper.
\section{The weight distribution of the BSD representations of an integer}\label{sec_weights}
As our first result, we derive a weight distribution of $i$-bit BSD representations of $n$, in terms of the Stern polynomials $B_{2^i-n}(t)$. This permits the application of the many prior results on the Stern polynomial to the study of BSD representations of integers.

The result given here is exact, so improves upon the upper bound given in \cite{Tuma_2015} by T{\r{u}}ma and V\'abek for the number of $i$-bit BSD representations of an integer $n$ of a given weight. 

This derivation depends on a transform  between the $i$-bit BSD representations of $n$ and the $i$-bit hyperbinary representations of $(2^i-1-n)$. 
\begin{definition}[BSD representation of an integer]\label{def_bsd}
	An integer $n$ is in \emph{BSD representation} when
	$$
		n=\sum_{j=0}^{i-1}b_j2^j \text{, where $b_j \in \{1,0,-1\}$.}
    $$
\end{definition}
 \begin{definition}[Hyperbinary representation of an integer]~\cite{reznick90}
	An integer $n$ is in \emph{hyperbinary representation} when
	$$
		n=\sum_{j=0}^{i-1}h_j2^j \text{, where $h_j \in \{0,1,2\}$.}
	$$
\end{definition}
In \cite{monroe21}, Monroe noted a correspondence between the BSD representations of $n$ and the hyperbinary representations of $2^i-1-n$. This correspondence is restated here in Theorem~\ref{theorem_bsd_hb_rep}. \begin{theorem}~\cite{monroe21}\label{theorem_bsd_hb_rep}
    $(b_{i-1}\cdots b_0)$ is an $i$-bit BSD representation of a non-negative integer $n$ if and only if $((1-b_{i-1})\cdots (1-b_0))$ is an $i$-bit hyperbinary representation of $2^i-1-n$.
\end{theorem}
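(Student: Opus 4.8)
The plan is to prove this by a single digit-wise change of variables together with one elementary arithmetic identity, after which both directions of the biconditional fall out simultaneously. The key observation is to set $h_j = 1 - b_j$ for each $j \in \{0,\dots,i-1\}$, so that the claimed hyperbinary string $((1-b_{i-1})\cdots(1-b_0))$ is exactly $(h_{i-1}\cdots h_0)$. First I would record the crucial fact that the affine map $b \mapsto 1-b$ is a bijection from the BSD digit set $\{-1,0,1\}$ onto the hyperbinary digit set $\{0,1,2\}$: indeed $1-(-1)=2$, $1-0=1$, and $1-1=0$. Because this is a genuine bijection between the two digit sets, the condition ``$b_j \in \{-1,0,1\}$ for all $j$'' is logically equivalent to ``$h_j \in \{0,1,2\}$ for all $j$,'' which is what makes the argument an ``if and only if'' rather than a one-directional implication.

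Next I would handle the value carried by the two representations. Assuming the string is a valid BSD representation of $n$, so that $n = \sum_{j=0}^{i-1} b_j 2^j$, I compute the value of the associated hyperbinary string by linearity of the sum and the geometric-series identity $\sum_{j=0}^{i-1} 2^j = 2^i - 1$:
$$
\sum_{j=0}^{i-1} h_j 2^j = \sum_{j=0}^{i-1}(1-b_j)2^j = \sum_{j=0}^{i-1}2^j - \sum_{j=0}^{i-1}b_j 2^j = (2^i-1) - n.
$$
Thus $(h_{i-1}\cdots h_0)$ represents exactly $2^i-1-n$, and since the digits are valid hyperbinary digits by the bijection, it is a legitimate $i$-bit hyperbinary representation of $2^i-1-n$. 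This establishes the forward direction.

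For the converse I would simply reverse the same computation: if $((1-b_{i-1})\cdots(1-b_0))$ is a valid $i$-bit hyperbinary representation of $2^i-1-n$, then each $1-b_j \in \{0,1,2\}$ forces $b_j \in \{-1,0,1\}$, and solving the displayed identity for $\sum b_j 2^j$ gives $\sum_{j=0}^{i-1} b_j 2^j = (2^i-1) - (2^i-1-n) = n$, so $(b_{i-1}\cdots b_0)$ is a valid $i$-bit BSD representation of $n$. Since the substitution $h_j = 1-b_j$ is its own structural inverse ($b_j = 1 - h_j$) and preserves the digit-by-digit indexing, no case analysis is needed. There is no genuinely hard step here; the only point demanding care is the bookkeeping of the constant term $\sum_{j=0}^{i-1}2^j = 2^i-1$ and confirming that the digit-set correspondence is a bijection (and not merely an inclusion), since it is precisely that bijectivity that upgrades the implication to an equivalence.
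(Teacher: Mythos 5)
Your proof is correct and complete: the substitution $h_j = 1-b_j$ is a bijection between the digit sets $\{-1,0,1\}$ and $\{0,1,2\}$, and the identity $\sum_{j=0}^{i-1}(1-b_j)2^j = (2^i-1)-n$ yields both directions of the equivalence at once. The paper states this theorem without proof (it is imported from the cited earlier work), and your argument is precisely the digit transform $1\leftrightarrow 0$, $0\leftrightarrow 1$, $-1\leftrightarrow 2$ that the paper records in Equation~(\ref{eq_translation}), so your approach matches the paper's treatment exactly.
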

As noted in \cite{monroe21}, Theorem~\ref{theorem_bsd_hb_rep} gives the following transform between the digits of a BSD representation $(b_{i-1}\cdots b_0)$ of an integer $n$ and the digits of the corresponding hyperbinary representation, $(h_{i-1}\cdots h_0)$ of $2^i-1-n$.
\begin{equation}\label{eq_translation}
\begin{aligned}
   1\leftrightarrow 0&,\\ 0\leftrightarrow 1&,\\
  -1\leftrightarrow 2&. 
\end{aligned}
\end{equation}
Klavžar et al. \cite{KLAVZAR200786} relate the coefficients of Stern polynomials to hyperbinary representations in the following theorem.
\begin{theorem}~\cite{KLAVZAR200786}\label{theorem_ones_in_hb}
    Let 
    $h_1(n,\ell)$
    be the number of hyperbinary representations of $n \in \mathbb{N}$ that have exactly $\ell$ $1$s. Then
    $$
    B_n=\sum_{\ell \ge 0}
    h_1(n-1,\ell)\cdot
    t^\ell.
    $$
\end{theorem}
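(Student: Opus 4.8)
The plan is to prove the identity by strong induction on $n$, matching the defining recursion of $B_n$ against a parallel recursion for a hyperbinary counting polynomial. To this end I would introduce the generating polynomial
$$H_m(t) = \sum_{\ell \ge 0} h_1(m,\ell)\, t^\ell,$$
so that the claim becomes simply $B_n = H_{n-1}$ for all $n \ge 1$, adopting the conventions $H_0 = 1$ (the integer $0$ has the single all-zero hyperbinary representation, with no $1$s) and $H_{-1} = 0$ (no negative integer has a hyperbinary representation).

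The combinatorial heart of the argument is a recursion for $H_m$ obtained by conditioning on the least significant digit $h_0$ of a hyperbinary representation of $m$. Since $m \equiv h_0 \pmod 2$ with $h_0 \in \{0,1,2\}$, stripping off $h_0$ and halving produces a hyperbinary representation of a smaller integer, with the number of $1$s decreasing by exactly $1$ when $h_0 = 1$ and by $0$ otherwise. First I would treat the even case $m = 2k$: here $h_0 \in \{0,2\}$, the choice $h_0 = 0$ leaving a representation of $k$ and $h_0 = 2$ leaving one of $k-1$, neither contributing a $1$, which yields $H_{2k} = H_k + H_{k-1}$. Then the odd case $m = 2k+1$ forces $h_0 = 1$, leaving a representation of $k$ and adding a single $1$, so $H_{2k+1} = t\,H_k$. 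I would justify that these passages are genuine bijections onto the representations of the halved integers, noting that padding with high-order zeros affects neither the count of representations nor the count of $1$s.

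With these two recursions in hand, the induction is immediate. For the base cases, $B_0 = 0 = H_{-1}$ and $B_1 = 1 = H_0$. For $n \ge 2$, if $n = 2m$ then the definition gives $B_{2m} = t\,B_m = t\,H_{m-1}$ by the inductive hypothesis, while the odd recursion for $H$ gives $H_{2m-1} = H_{2(m-1)+1} = t\,H_{m-1}$, so the two agree; if $n = 2m+1$ then $B_{2m+1} = B_m + B_{m+1} = H_{m-1} + H_m$ by hypothesis, while the even recursion gives $H_{2m} = H_m + H_{m-1}$, again matching.

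I expect the only real obstacle to be the careful verification of the $H_m$ recursions rather than the induction itself: one must check that conditioning on $h_0$ partitions the representations of $m$ without over- or under-counting, handle the boundary value $H_{-1} = 0$ consistently (it appears precisely in the $h_0 = 2$ branch when $k = 0$), and confirm that the chosen convention for leading zeros makes $H_m$ a genuine finite polynomial equal to the intended count. Once these bookkeeping points are settled, the degrees and coefficients shift exactly as the recursion for $B_n$ demands.
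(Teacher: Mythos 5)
Your proof is correct. Note, however, that the paper itself offers no proof of this statement: it is imported verbatim from Klav\v{z}ar, Milutinovi\'c and Petr \cite{KLAVZAR200786}, so there is no in-paper argument to compare against. Your strong induction is essentially the standard argument for this result (and matches the spirit of the original source): the two recursions $H_{2k}=H_k+H_{k-1}$ and $H_{2k+1}=t\,H_k$, obtained by conditioning on the least significant digit $h_0\in\{0,1,2\}$, mirror exactly the defining recursion of $B_n$ with the index shifted by one, and your bookkeeping is sound --- the boundary convention $H_{-1}=0$ correctly handles the vacuous $h_0=2$ branch at $k=0$, the base cases $B_0=H_{-1}$ and $B_1=H_0$ check out, and in the odd case $n=2m+1$ the inductive hypothesis is legitimately invoked at $m+1\le n-1$ since $m\ge 1$. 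Your remark about leading zeros is the one genuinely necessary care point, since the paper's Definition of hyperbinary representations is length-dependent while $h_1(n,\ell)$ is not; identifying representations up to high-order zero padding, as you do, is precisely the convention under which the stripping maps are bijections. In short: a complete, self-contained proof of a result the paper only cites.
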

Using Klavžar's Theorem~\ref{theorem_ones_in_hb} and the transform in Equation (\ref{eq_translation}), we obtain a weight distribution of the $i$-bit BSD representations of an integer $n$ in terms of the Stern polynomial of $2^i-n$.
\begin{theorem}~\label{theorem_weightdist}
    Let $b_0(n,i,\ell)$ be the number of $i$-bit BSD representations of $n \in \mathbb{N}$ that have exactly $\ell$ $0$s (and thus have weight $i-\ell$). Then
    $$
    B_{2^i-n}=\sum_{\ell \ge 0}
    b_0(n,i,\ell)\cdot
    t^\ell.
    $$
\end{theorem}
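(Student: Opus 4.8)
The plan is to chain the two correspondences already available. Klav\v{z}ar's Theorem~\ref{theorem_ones_in_hb} expresses each coefficient of $B_{2^i-n}$ as a count of $1$s among hyperbinary representations, while Monroe's Theorem~\ref{theorem_bsd_hb_rep} matches BSD representations of $n$ against hyperbinary representations of $2^i-1-n$. The goal is to identify the coefficient of $t^\ell$ in $B_{2^i-n}$ with $b_0(n,i,\ell)$. First I would apply Theorem~\ref{theorem_ones_in_hb} with its index specialized to $2^i-n$, which gives that the coefficient of $t^\ell$ in $B_{2^i-n}$ is $h_1(2^i-n-1,\ell)$, the number of hyperbinary representations of $2^i-1-n$ having exactly $\ell$ ones. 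It therefore suffices to prove $b_0(n,i,\ell)=h_1(2^i-1-n,\ell)$.

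Next I would establish this equality through the digitwise bijection of Theorem~\ref{theorem_bsd_hb_rep}. Under the transform of Equation~(\ref{eq_translation}), each BSD digit $b_j$ is sent to the hyperbinary digit $1-b_j$; in particular $b_j=0$ is sent to $1$, and no other BSD digit value produces a hyperbinary $1$. Consequently the bijection carries each $i$-bit BSD representation of $n$ with exactly $\ell$ zeros to an $i$-bit hyperbinary representation of $2^i-1-n$ with exactly $\ell$ ones, and conversely. Counting both sides shows that $b_0(n,i,\ell)$ equals the number of \emph{$i$-bit} hyperbinary representations of $2^i-1-n$ with exactly $\ell$ ones.

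The main obstacle is reconciling the length convention: Theorem~\ref{theorem_bsd_hb_rep} speaks of $i$-bit hyperbinary representations, whereas the quantity $h_1$ in Theorem~\ref{theorem_ones_in_hb} counts hyperbinary representations of unrestricted length. I would close this gap by showing the two counts agree for $n\ge 1$. Since $2^i-1-n<2^i$, any hyperbinary representation placing a nonzero digit in position $i$ or higher would have value at least $2^i$, a contradiction; hence every hyperbinary representation of $2^i-1-n$ already fits within $i$ digits, shorter ones being extended uniquely by leading zeros that contribute no $1$s. Thus the $i$-bit count coincides with the unrestricted count $h_1(2^i-1-n,\ell)$.

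Combining the three steps yields $b_0(n,i,\ell)=h_1(2^i-1-n,\ell)=h_1(2^i-n-1,\ell)$, which is precisely the coefficient of $t^\ell$ in $B_{2^i-n}$, giving $B_{2^i-n}=\sum_{\ell\ge 0}b_0(n,i,\ell)\,t^\ell$ as claimed. The weight statement $i-\ell$ then follows immediately, since an $i$-bit representation with $\ell$ zeros has exactly $i-\ell$ nonzero digits.
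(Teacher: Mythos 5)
Your proof is correct and follows essentially the same route as the paper's: apply Klav\v{z}ar's Theorem~\ref{theorem_ones_in_hb} to $B_{2^i-n}$, then use the digitwise transform of Equation~(\ref{eq_translation}) to identify $h_1(2^i-n-1,\ell)$ with $b_0(n,i,\ell)$. Your extra step reconciling the $i$-bit hyperbinary count with the unrestricted-length count $h_1$ addresses a detail the paper leaves implicit, and you handle it correctly.
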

\begin{proof}
    By Theorem~\ref{theorem_ones_in_hb},
    $$B_{2^i-n}=\sum_{\ell \ge 0}
    h_1(2^i-n-1,\ell)\cdot
    t^\ell.$$ By Equation (\ref{eq_translation}), there is a one-to-one correspondence between the $i$-bit hyperbinary representations of $2^i-1-n$ having exactly $\ell$ $1$s and the $i$-bit BSD representations of $n$ having exactly $\ell$ $0$s. 
    So $h_1(2^i-n-1,\ell)=b_0(n,i, \ell).$ 
\end{proof}
 \begin{corollary}\label{cor_naf}
    Let $n \in I_k$. Then the number of $0$s in the reduced NAF of $n$ is $deg(B_{2^k-n})$, and its weight is $k-deg(B_{2^k-n})$.
\end{corollary}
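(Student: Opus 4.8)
The plan is to specialize the weight-distribution theorem (Theorem~\ref{theorem_weightdist}) to the case $i = k$ and then identify the reduced NAF as the representation that realizes the top-degree term. First I would observe that, by Theorem~\ref{theorem_weightdist}, the coefficient of $t^\ell$ in $B_{2^k-n}$ equals $b_0(n,k,\ell)$, the number of $k$-bit BSD representations of $n$ having exactly $\ell$ zeros. Hence $deg(B_{2^k-n})$ is precisely the largest value of $\ell$ for which such a representation exists, i.e. the \emph{maximal} number of zeros attainable by any $k$-bit BSD representation of $n$.

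Next I would translate ``maximal number of zeros'' into ``minimal weight.'' Since every $k$-bit BSD representation has exactly $k$ digits, its number of zeros $z$ and its weight $w$ satisfy $z + w = k$; thus maximizing $z$ over all $k$-bit representations is the same as minimizing $w$. Consequently $deg(B_{2^k-n}) = k - w_{\min}$, where $w_{\min}$ denotes the minimal weight among $k$-bit BSD representations of $n$.

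The remaining step is to connect $w_{\min}$ to the NAF. I would invoke the classical fact that the NAF is the minimal-weight BSD representation of $n$. Because $n \in I_k$, its reduced NAF has bitlength exactly $k$, so the reduced NAF is itself one of the $k$-bit BSD representations counted by Theorem~\ref{theorem_weightdist}, and it attains the minimal weight $w_{\min}$. Therefore its number of zeros equals $k - w_{\min} = deg(B_{2^k-n})$, and its weight equals $w_{\min} = k - deg(B_{2^k-n})$, as claimed. Note that only the fact that the NAF \emph{achieves} the minimum weight is needed here, not its uniqueness.

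The main obstacle is the bookkeeping around bitlength: I must be sure that the reduced NAF of $n$ genuinely lies inside the set of $k$-bit representations indexed by Theorem~\ref{theorem_weightdist}, which is exactly what membership $n \in I_k$ guarantees. The other delicate point is the appeal to minimality of the NAF; this is standard, but I would cite it explicitly so that the identification of $w_{\min}$ with the weight of the reduced NAF is rigorous rather than merely assumed.
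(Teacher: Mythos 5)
Your proposal is correct and follows essentially the same route as the paper: the paper's proof likewise combines Theorem~\ref{theorem_weightdist} with the minimality of the NAF's weight, which you have simply spelled out in more detail (including the useful observation that $n \in I_k$ guarantees the reduced NAF is among the $k$-bit representations being counted). No gap; your version is just a fuller writeup of the paper's one-line argument.
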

\begin{proof}
    This follows immediately from the weight distribution result in Theorem~\ref{theorem_weightdist}, and the fact that the NAF of $n$ has minimum weight.
\end{proof}
\section{Non-adjacent form and the NAF-interval $I_k$ }
Reitweisner introduced the non-adjacent form (NAF) of an integer in \cite{REITWIESNER1960231}. The NAF-interval $I_k$ turns out to be very useful in the derivation of several properties of Stern polynomials, and therefore of BSD representations. 

Using the definitions in this section, we will calculate degrees and leading coefficients of Stern polynomials recursively across NAF-intervals. The weight-distribution Theorem~\ref{theorem_weightdist} then allows us to extend that result to count the number of optimal BSD representations of integers along with their weights.

\begin{definition}[Non-adjacent form]~\cite{REITWIESNER1960231}
    The $k$-bit \emph{non-adjacent form} (NAF) of an integer $n$ is a BSD representation $(b_{k-1}\cdots b_0)$ of $n$, with the property that for all $0 \le i<k-1$, either $b_i$ or $b_{i+1}$ must be $0$. An NAF is said to be \emph{reduced} if $b_{k-1} \ne 0$.
\end{definition}
Every integer $n$ has exactly one reduced NAF. The NAF of $n$ has the most $0$s of any BSD representation of $n$ of the same length, so the smallest weight, thus making the NAF good for fast arithmetic. However, the NAF of $n$ may not be the only BSD representation of its length having minimal weight. 

\begin{definition}[NAF-bitlength]
    The \emph{NAF-bitlength} of $n$ is the bitlength of the reduced NAF of $n$.
\end{definition}
\begin{definition}[NAF-interval]
    The \emph{NAF-interval of bitlength $k$} is the interval of positive integers having NAF-bitlength $k$. We denote this as $I_k$.
\end{definition}
The following simple lemma shows a symmetry on the NAF-interval $I_k$: all integers in $I_k$ come in pairs $(n, 2^k-n)$, except for $2^{k-1}$, which is the midpoint of $I_k$ and is paired with itself. 
\begin{lemma}\label{lemma_naf_symmetricity}
    An integer $n \in I_k$ if and only if $2^k-n \in I_k$.  
\end{lemma}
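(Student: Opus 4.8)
The plan is to prove the lemma by an explicit digit-level transform on reduced NAFs, using uniqueness of the reduced NAF to pin down the bitlength. Since the relation is an involution (as $2^k-(2^k-n)=n$), it suffices to prove one direction: if $n\in I_k$ then $2^k-n\in I_k$.

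First I would record two elementary facts about the reduced NAF $(b_{k-1}\cdots b_0)$ of a positive integer $n\in I_k$. Because any BSD string on the positions $0,\ldots,k-2$ has absolute value at most $\sum_{j=0}^{k-2}2^j=2^{k-1}-1<2^{k-1}$, the sign of $n$ is the sign of its leading digit; hence positivity of $n$ forces $b_{k-1}=+1$. The non-adjacency property together with $b_{k-1}\ne 0$ then forces $b_{k-2}=0$.

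The key step is the computation
\[
2^k-n = 2^k-2^{k-1}-\sum_{j=0}^{k-2}b_j 2^j = 2^{k-1}+\sum_{j=0}^{k-2}(-b_j)2^j,
\]
which exhibits $2^k-n$ in the $k$-bit representation $(1,\,-b_{k-2},\,\ldots,\,-b_0)$ obtained by keeping the leading $1$ and negating every lower digit. I would then check that this representation is a reduced NAF: it is reduced since its leading digit is $1\ne 0$; non-adjacency on positions $0,\ldots,k-2$ is inherited because negating digits preserves the ``at most one of two consecutive digits is nonzero'' condition; and at the top boundary the new digit in position $k-2$ is $-b_{k-2}=0$ by the second fact, so positions $k-1,k-2$ are also fine. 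By uniqueness of the reduced NAF, this must be \emph{the} reduced NAF of $2^k-n$, so $2^k-n$ has NAF-bitlength exactly $k$; moreover its leading digit $+1$ makes it positive, so $2^k-n\in I_k$.

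I do not expect a genuine obstacle, since the lemma is elementary, but the two points requiring care are the boundary digit $b_{k-2}=0$ (this is exactly what makes the negation transform land back in the NAF at the top) and the appeal to uniqueness of the reduced NAF, which is what upgrades ``$2^k-n$ has a $k$-bit NAF'' to ``$2^k-n$ has NAF-bitlength exactly $k$.'' An alternative, more computational route would compute the endpoints $a_k=\min I_k$ and $b_k=\max I_k$ explicitly from the alternating patterns $1\,0\,\bar1\,0\cdots$ and $1\,0\,1\,0\cdots$, verify $a_k+b_k=2^k$, and deduce the symmetry from $I_k=[a_k,b_k]$; but this requires the extra step of showing these intervals tile the positive integers, so the direct transform above is cleaner.
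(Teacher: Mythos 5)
Your proof is correct and follows essentially the same route as the paper: the paper's proof also writes the reduced NAF of $n$ as $(1\ 0\ n_{k-3}\cdots n_0)$ and observes that negating the digits below the leading $1\,0$ yields the reduced NAF of $2^k-n$, which has the same bitlength $k$. Your version simply makes explicit the details the paper leaves implicit (why $b_{k-1}=1$ and $b_{k-2}=0$, the non-adjacency check, and the appeal to uniqueness of the reduced NAF).
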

\begin{proof}
    Let $n$ be positive and let $\bar{n}_i = -n_i$. The NAF of $n$ is $(1\ \ 0 \ \  n_{k-3}\dots n_1\ \ n_0)$ if and only if the NAF of $2^k-n$ is $(1 \ \ 0 \ \ \bar{n}_{k-3}\dots \bar{n}_1\ \ \bar{n}_0)$. Both of these have NAF-bitlength $k$. 
\end{proof}
\begin{definition}[Sibling integers]
    Let $n  \in I_k$ . The \emph{sibling} of $n$ is $2^k-n$. 
\end{definition}
Siblings appear throughout this paper, and have already appeared in Theorem~\ref{theorem_weightdist}, giving the weight distribution of the BSD representations of $n$ in terms of the Stern polynomial of its sibling $2^k-n$.

\section{A partition on the NAF-interval $I_k$}
The following partition of $I_k$ is the basis for the recursions shown in subsequent sections. We will draw correspondences between $\mathcal{A}_k$, $\mathcal{C}_k$ and $I_{k-2}$, and between $\mathcal{B}_k$ and the first half of $I_{k-1}$, and use these to establish recursions for Stern polynomials and BSDs of integers in $I_k$. The partition is illustrated in Fig.~\ref{fig:partition_Ik}.

The interval is partitioned into three disjoint subintervals, so $I_k = \mathcal{A}_k \cup \mathcal{B}_k \cup \mathcal{C}_k$. The integers $a_k$, $b_k$, and $c_k$ are the lower bounds of the intervals $\mathcal{A}_k$, $\mathcal{B}_k$, and $\mathcal{C}_k$. The first and third of these subintervals, $\mathcal{A}_k$ and $\mathcal{C}_k$, have the same length as $I_{k-2}$. The middle subinterval, $\mathcal{B}_k$, has the same length as $I_{k-1}$.
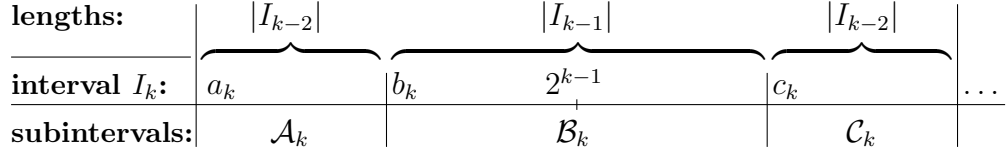
\begin{figure}[h!t]
\centering
\setlength\unitlength{2pt}
\begin{picture}(168,35)

    \put(-10,9){\line(1,0){188}}

    \put(-10,24){\small \bf{lengths:}}
    \put(-10,18){\line(1,0){34}}
    \put(-10,11){\small \bf{interval $I_k$:}}
    \put(-10,2){\small \bf{subintervals:}}
    
    \put(25,1){\line(0,1){27}}
    \put(26,17){$\overbrace{\hspace{5.75em}}$}
    \put(35,24){$\lvert I_{k-2}\rvert$}
    \put(27,11){$a_k$}
    \put(39,2){$\mathcal{A}_k$}

    \put(61,1){\line(0,1){14}}
    \put(62,17){$\overbrace{\hspace{12em}}$}
    \put(90,24){$\lvert I_{k-1}\rvert$}
    \put(62,11){$b_k$}

    \put(97,8){\line(0,1){2}}
    \put(91,11){$2^{k-1}$}
    \put(93,2){$\mathcal{B}_k$}

    \put(133,1){\line(0,1){14}}
    \put(134,17){$\overbrace{\hspace{5.75em}}$}
    \put(144,24){$\lvert I_{k-2}\rvert$}
    \put(134,11){$c_k$}
    \put(148,2){$\mathcal{C}_k$}

    \put(169,1){\line(0,1){27}}
    \put(170,11){$\dots$}

\end{picture}
    \caption{Partition of $I_k$, the interval of integers having NAF-bitlength $k \ge 3$. The values above the brackets indicate the length of the subintervals. The integers $a_k$, $b_k$, and $c_k$ are the lower bounds of the intervals $\mathcal{A}_k$, $\mathcal{B}_k$, and $\mathcal{C}_k$, respectively. $2^{k-1}$ is the midpoint of $I_k$.}
    \label{fig:partition_Ik}
\end{figure}
\begin{lemma}\label{lemma_interval_nafX}
    The interval of integers $I_k$ is $$[a_k, a_{k+1}) = \left[ \ceil{\frac{2^k}{3}}, \ceil{\frac{2^{k+1}}{3}}\right).$$
\end{lemma}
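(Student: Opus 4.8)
The plan is to pin down the smallest and largest elements of $I_k$ by choosing the extremal tails of a reduced NAF, evaluate the resulting geometric sums in closed form, and then promote the containment $I_k\subseteq[a_k,a_{k+1})$ to an equality by a partition argument.

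First I would fix a positive integer $n$ whose reduced NAF has bitlength $k$. Since $n>0$ its leading digit is $b_{k-1}=1$, and the non-adjacency condition forces $b_{k-2}=0$; thus $n=2^{k-1}+T$, where $T$ is the value of an arbitrary valid NAF supported on positions $0,\dots,k-3$. As $T$ ranges over all such tails, its value runs from $-(2^{k-3}+2^{k-5}+\cdots)$ (take the alternating pattern $\cdots 0\,\bar 1\,0\,\bar 1$) up to $+(2^{k-3}+2^{k-5}+\cdots)$ (the pattern $\cdots 0\,1\,0\,1$). Hence $\min I_k=2^{k-1}-(2^{k-3}+2^{k-5}+\cdots)$ and $\max I_k=2^{k-1}+(2^{k-3}+2^{k-5}+\cdots)$.

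Next I would evaluate the minimum. The sum $2^{k-3}+2^{k-5}+\cdots$ is a finite geometric series with ratio $\tfrac14$, whose last term is $2^0$ or $2^1$ according to the parity of $k$; summing it and simplifying gives $\min I_k=\ceil{2^k/3}=a_k$. The two parity cases collapse into the single ceiling because $2^k\equiv(-1)^k\pmod 3$, so $3\nmid 2^k$. This small parity bookkeeping is the only genuine computation in the proof. For the maximum I would avoid repeating the computation by invoking the symmetry of Lemma~\ref{lemma_naf_symmetricity}: the map $n\mapsto 2^k-n$ is an order-reversing bijection of $I_k$ onto itself, so it carries $\min I_k$ to $\max I_k$, giving $\max I_k=2^k-a_k$. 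The identity $2^k-\ceil{2^k/3}=\ceil{2^{k+1}/3}-1=a_{k+1}-1$ (again using $3\nmid 2^{k+1}$) then shows $\max I_k=a_{k+1}-1$, so every $n\in I_k$ satisfies $a_k\le n<a_{k+1}$.

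Finally I would upgrade this containment $I_k\subseteq[a_k,a_{k+1})$ to an equality. Because every positive integer has exactly one reduced NAF, the sets $\{I_k\}$ partition the positive integers; and since $a_k$ is strictly increasing with $a_1=1$ and the half-open intervals abut, the intervals $\{[a_k,a_{k+1})\}$ also partition the positive integers. Given that $I_k$ is contained in the $k$-th interval for every $k$ and both families are partitions of the same set with the same index, they must coincide termwise, which is exactly $I_k=[a_k,a_{k+1})$. I expect the main obstacle to be purely organizational: correctly arguing that the extremal tails are the ones written down and keeping the parity cases in the geometric sum straight; the partition argument is the clean finish that sidesteps any direct (and potentially gappy) verification that $I_k$ is a block of consecutive integers.
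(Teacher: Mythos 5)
Your proof is correct, and it shares the paper's arithmetic core --- the extremal NAF patterns are the alternating ones, and the geometric sums together with $2^k \equiv \pm 1 \pmod 3$ produce the ceilings --- but it is organized along a genuinely different route. The paper's proof names $n_k$, the greatest integer of NAF-bitlength $k$, reads it off as $(1010\cdots10)$ or $(1010\cdots01)$, evaluates it as $\floor{\frac{2^{k+1}}{3}}$, and then simply writes $a_{k+1}=n_k+1$ and $a_k=n_{k-1}+1$; this last step tacitly assumes that consecutive NAF-intervals abut with no gaps, which the paper never argues. You instead prove the containment $I_k \subseteq [a_k,a_{k+1})$ via the decomposition $n=2^{k-1}+T$ and bounds on the tail $T$, and then upgrade containment to equality by noting that both $\{I_k\}$ (by uniqueness of the reduced NAF) and $\{[a_k,a_{k+1})\}$ (by abutting half-open intervals starting at $a_1=1$) partition the positive integers, so termwise containment forces termwise equality. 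That partition step is a real addition: it supplies precisely the ``no gaps'' argument the paper leaves implicit. Your appeal to Lemma~\ref{lemma_naf_symmetricity} to obtain the maximum from the minimum is also sound and non-circular, since that lemma precedes this one and its proof does not use it. The one soft spot, which you flag yourself, is that you assert rather than prove that the extremal tails are the alternating patterns; but the paper makes exactly the same unproved assertion about $n_k$, so your level of rigor there matches the paper's.
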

\begin{proof}
    This proof rests on the fact that $2^k=1 \bmod 3$, if $k$ is even, and $2^k=2 \bmod 3$, if $k$ is odd. Let $n_k$ be the the greatest integer having NAF-bitlength $k$. Then $n_k=(1010 \dots 10)$ if $k$ is even, and $n_k=(1010 \dots 01)$ if $k$ is odd. In either case, $n_k=\floor{\frac{2^{k+1}}{3}}$.
So $a_{k+1}=n_k+1= \ceil{\frac{2^{k+1}}{3}}$, and $a_k= n_{k-1}+1 =\ceil{\frac{2^{k}}{3}}$. 
\end{proof}
The remainder of this section is comprised of a few lemmas on 
the length of $I_k$ and on 
the values of the lower endpoints $a_k, b_k, c_k$ of the partitions, to be used in the recursions in Section~\ref{section_deg_lc_stern} and ~\ref{sec_weights_BSDs}. 
These lemmas follow from Lemma~\ref{lemma_interval_nafX}, the partition definition and some arithmetic.
We do not include the routine proofs of Lemmas~\ref{lemma_length_naf} through \ref{lemma_values_bc} here, in the interest of brevity of exposition.
\begin{lemma}\label{lemma_length_naf} The length of $I_k$, with $k \ge 1$, is 
    $$
\lvert I_{k} \rvert = \lvert I_{k-1} \rvert + 2 \cdot \lvert I_{k-2} \rvert =
    \begin{cases}
        \floor{\frac{2^k}{3}}=a_k-1, &\text{if $k$ is even,}\\[.5em]%
        \ceil{\frac{2^k}{3}}=a_k, &\text{if $k$ is odd.}
    \end{cases}
$$
\end{lemma}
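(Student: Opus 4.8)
The plan is to read the length off directly from Lemma~\ref{lemma_interval_nafX}, which gives $I_k = [a_k, a_{k+1})$ with $a_k = \ceil{2^k/3}$. Since this is a half-open interval of integers, its length is simply $\lvert I_k \rvert = a_{k+1} - a_k = \ceil{2^{k+1}/3} - \ceil{2^k/3}$. The entire lemma then reduces to evaluating this difference of ceilings and recognizing the result, so no new structural facts about NAFs are needed beyond Lemma~\ref{lemma_interval_nafX}.

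First I would pin down the closed form by splitting on the parity of $k$, using the observation (already exploited in the proof of Lemma~\ref{lemma_interval_nafX}) that $2^k \equiv 1 \pmod 3$ when $k$ is even and $2^k \equiv 2 \pmod 3$ when $k$ is odd. This lets me replace each ceiling by an exact fraction: for even $k$, $\ceil{2^k/3} = (2^k+2)/3$ and $\floor{2^k/3} = (2^k-1)/3$, while for odd $k$, $\ceil{2^k/3} = (2^k+1)/3$ and $\floor{2^k/3} = (2^k-2)/3$. Substituting the appropriate expressions for $a_{k+1}$ and $a_k$ (noting that $k$ and $k+1$ have opposite parity) collapses the difference to $(2^k-1)/3 = \floor{2^k/3}$ when $k$ is even and to $(2^k+1)/3 = \ceil{2^k/3}$ when $k$ is odd. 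The stated identifications $\lvert I_k\rvert = a_k - 1$ (even case) and $\lvert I_k\rvert = a_k$ (odd case) then follow by one more substitution of the closed form for $a_k$.

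For the recursion $\lvert I_k\rvert = \lvert I_{k-1}\rvert + 2\lvert I_{k-2}\rvert$, I would first unify the two parity cases into the single formula $\lvert I_k\rvert = (2^k - (-1)^k)/3$, which one checks matches both branches above. The recursion is then pure algebra: expanding $\lvert I_{k-1}\rvert + 2\lvert I_{k-2}\rvert$ and using $(-1)^{k-1} = -(-1)^k$ together with $(-1)^{k-2} = (-1)^k$ to combine the sign terms yields $(2^k - (-1)^k)/3$, as required. Alternatively, the recursion can be obtained by telescoping $a_{k+1}-a_k$ directly, but the unified-formula route avoids further case analysis.

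The main obstacle — such as it is — is purely bookkeeping: keeping the ceiling/floor conversions consistent with the parity of the index actually appearing (so that one does not accidentally apply the even-$k$ conversion to $a_{k+1}$ when $k$ itself is even), and tracking the sign flips $(-1)^{k-1}$ versus $(-1)^{k-2}$ in the recursion step. There is no genuine combinatorial difficulty; the result is a direct consequence of Lemma~\ref{lemma_interval_nafX} together with the residue of $2^k$ modulo $3$, which is exactly why it can reasonably be deferred as routine.
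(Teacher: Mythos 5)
Your proof is correct and is exactly the route the paper intends: the paper omits this proof as routine, saying only that Lemmas~\ref{lemma_length_naf} through \ref{lemma_values_bc} ``follow from Lemma~\ref{lemma_interval_nafX}, the partition definition and some arithmetic,'' which is precisely your ceiling-difference computation $\lvert I_k \rvert = a_{k+1}-a_k$ split by parity, plus the unified form $(2^k-(-1)^k)/3$ for the recursion. The only caveat, inherited from the paper's own statement rather than from your argument, is that the recursive identity $\lvert I_{k} \rvert = \lvert I_{k-1} \rvert + 2 \lvert I_{k-2} \rvert$ only makes sense for $k \ge 3$ (or $k \ge 2$ with the convention $\lvert I_0 \rvert = 0$), since $I_{-1}$ is undefined, whereas the closed-form part of the lemma does hold for all $k \ge 1$ as you show.
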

\begin{lemma}\label{lemma_values_ac}
Let $I_k$ be partitioned as in Fig.~\ref{fig:partition_Ik}, where $k \ge 3$. Then
    \begin{align*}
        a_k &= 2^{k-2}+a_{k-2},\\
        c_k &= 2^{k-1}+a_{k-2}.
    \end{align*}
\end{lemma}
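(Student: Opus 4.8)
The plan is to reduce both identities to the closed form $a_k = \ceil{2^k/3}$ established in Lemma~\ref{lemma_interval_nafX}, together with the elementary observation that $I_k = [a_k, a_{k+1})$ forces $\lvert I_k \rvert = a_{k+1} - a_k$. Everything else is endpoint bookkeeping dictated by the partition in Fig.~\ref{fig:partition_Ik}.

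First I would prove $a_k = 2^{k-2} + a_{k-2}$ by direct ceiling arithmetic. Writing $2^k = 4\cdot 2^{k-2} = 3\cdot 2^{k-2} + 2^{k-2}$ and pulling the integer multiple $2^{k-2}$ outside the ceiling gives $\ceil{2^k/3} = 2^{k-2} + \ceil{2^{k-2}/3}$, and the two ceiling terms are exactly $a_k$ and $a_{k-2}$. This step is entirely routine; the only thing to watch is that $2^{k-2}$ is an integer, so that the identity $n + \ceil{x} = \ceil{n+x}$ for integer $n$ applies.

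For the second identity I would exploit the partition structure rather than recompute a ceiling. Since $\mathcal{C}_k$ is the topmost subinterval of $I_k$ and has length $\lvert I_{k-2}\rvert$, its lower endpoint satisfies $c_k = a_{k+1} - \lvert I_{k-2}\rvert$. Substituting $\lvert I_{k-2}\rvert = a_{k-1} - a_{k-2}$ (again from $I_{k-2} = [a_{k-2}, a_{k-1})$) yields $c_k = a_{k+1} - a_{k-1} + a_{k-2}$. Now applying the first identity with $k$ replaced by $k+1$ gives $a_{k+1} - a_{k-1} = 2^{k-1}$, whence $c_k = 2^{k-1} + a_{k-2}$, as claimed. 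Equivalently one could invoke the sibling symmetry of Lemma~\ref{lemma_naf_symmetricity}, under which $\mathcal{C}_k$ is the image of $\mathcal{A}_k$, to obtain $c_k = 2^k - b_k + 1$ and then expand $b_k = a_k + \lvert I_{k-2}\rvert$; this lands on the same expression.

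The only genuinely delicate point is the ceiling manipulation in the first step, plus making sure the endpoint and length bookkeeping of the partition is consistent, namely that $\mathcal{C}_k$ really is flush against the right end $a_{k+1}$ of $I_k$ and has length $\lvert I_{k-2}\rvert$; both are guaranteed by the partition definition in Fig.~\ref{fig:partition_Ik}. Once the first identity is in hand, the second is a pure telescoping of endpoint values and requires no new case analysis on the parity of $k$, which is the main convenience of routing through $a_{k+1} - a_{k-1}$ rather than through the piecewise length formula of Lemma~\ref{lemma_length_naf}.
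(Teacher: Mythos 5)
Your proof is correct, and it follows exactly the route the paper prescribes: the paper omits the proofs of Lemmas~\ref{lemma_length_naf}--\ref{lemma_values_bc} as routine, stating only that they ``follow from Lemma~\ref{lemma_interval_nafX}, the partition definition and some arithmetic,'' which is precisely your ceiling manipulation $\ceil{2^k/3} = 2^{k-2} + \ceil{2^{k-2}/3}$ plus endpoint bookkeeping on the partition. Nothing further is needed.
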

\begin{lemma}\label{lemma_values_bc}
Let $I_k$ be partitioned as in Fig.~\ref{fig:partition_Ik}, where $k \ge 3$. 
Then 
    \begin{align*}
        b_k &=
        \begin{cases}
            2^{k-1}-\lvert I_{k-2}\rvert &\text{if $k$ is even,}\\
            2^{k-1}-(\lvert I_{k-2}\rvert-1) &\text{if $k$ is odd.}
        \end{cases}\\
        c_k &=
        \begin{cases}
            2^{k-1}+(\lvert I_{k-2}\rvert+1) &\text{if $k$ is even,}\\
            2^{k-1}+\lvert I_{k-2}\rvert &\text{if $k$ is odd.}
        \end{cases}
    \end{align*}
\end{lemma}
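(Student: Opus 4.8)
The plan is to read off $b_k$ and $c_k$ directly from the partition in Fig.~\ref{fig:partition_Ik}, substitute the closed forms already established in Lemma~\ref{lemma_values_ac}, and then eliminate the auxiliary quantity $a_{k-2}$ in favor of $\lvert I_{k-2}\rvert$ using Lemma~\ref{lemma_length_naf}. The only subtlety is a parity split, since Lemma~\ref{lemma_length_naf} relates $a_{k-2}$ and $\lvert I_{k-2}\rvert$ differently according to the parity of $k-2$, which coincides with the parity of $k$.

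First I would record the two endpoint relations forced by the partition. Since $\mathcal{A}_k$ starts at $a_k$ and has length $\lvert I_{k-2}\rvert$, its successor $\mathcal{B}_k$ begins at $b_k = a_k + \lvert I_{k-2}\rvert$; and since $\mathcal{B}_k$ has length $\lvert I_{k-1}\rvert$, we have $c_k = b_k + \lvert I_{k-1}\rvert$ (the formula for $c_k$ is in any case already available from Lemma~\ref{lemma_values_ac}). Substituting $a_k = 2^{k-2} + a_{k-2}$ gives $b_k = 2^{k-2} + a_{k-2} + \lvert I_{k-2}\rvert$.

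Next I would invoke Lemma~\ref{lemma_length_naf}, which (taking $k-2$ of the same parity as $k$) gives $a_{k-2} = \lvert I_{k-2}\rvert + 1$ when $k$ is even and $a_{k-2} = \lvert I_{k-2}\rvert$ when $k$ is odd. This turns the expression for $b_k$ into $2^{k-2} + 2\lvert I_{k-2}\rvert + 1$ in the even case and $2^{k-2} + 2\lvert I_{k-2}\rvert$ in the odd case. To reach the target form I would use the mod-$3$ facts already exploited in Lemma~\ref{lemma_interval_nafX}, namely $2^{k-2} = 3\lvert I_{k-2}\rvert + 1$ for $k$ even and $2^{k-2} = 3\lvert I_{k-2}\rvert - 1$ for $k$ odd. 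Substituting these, together with $2^{k-1} = 2\cdot 2^{k-2}$, shows that both the derived value of $b_k$ and the claimed value $2^{k-1} - \lvert I_{k-2}\rvert$ (even) resp. $2^{k-1} - (\lvert I_{k-2}\rvert - 1)$ (odd) reduce to the same polynomial in $\lvert I_{k-2}\rvert$. The value of $c_k$ needs no such work: applying the same parity relation to $c_k = 2^{k-1} + a_{k-2}$ from Lemma~\ref{lemma_values_ac} yields the two cases directly.

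The computation is almost entirely mechanical, so the only thing that could go wrong is the bookkeeping in the parity split: one must be sure that $a_{k-2}$ versus $\lvert I_{k-2}\rvert$ is taken from the correct branch of Lemma~\ref{lemma_length_naf} and that the mod-$3$ residue matches. I would therefore state the two identities $a_{k-2} = \lvert I_{k-2}\rvert + 1$ (for $k$ even) and $a_{k-2} = \lvert I_{k-2}\rvert$ (for $k$ odd) once at the outset and apply them uniformly to both $b_k$ and $c_k$, which keeps the even and odd cases in lockstep and guards against sign errors in the offset terms.
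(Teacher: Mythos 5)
Your proof is correct. The paper in fact omits the proof of this lemma as routine, noting only that it ``follows from Lemma~\ref{lemma_interval_nafX}, the partition definition and some arithmetic,'' and your derivation is precisely that intended route: reading $b_k = a_k + \lvert I_{k-2}\rvert$ and $c_k = 2^{k-1}+a_{k-2}$ off the partition and Lemma~\ref{lemma_values_ac}, then eliminating $a_{k-2}$ via the parity cases of Lemma~\ref{lemma_length_naf} (your parity bookkeeping, including the mod-$3$ identities $2^{k-2}=3\lvert I_{k-2}\rvert+1$ for $k$ even and $2^{k-2}=3\lvert I_{k-2}\rvert-1$ for $k$ odd, checks out).
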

The following lemma gives a relationship between siblings in $I_{k-2}$ and siblings in $I_k$ from $\mathcal{A}_k$ and $\mathcal{C}_k$.
\begin{lemma} \label{lemma_sibs_across_k}
    Let $0 \le v < \lvert I_{k-2}\rvert$. Then
  $$2^{k-2}-(a_{k-2}+v)=a_{k-2}+x \text{ if and only if } 
        2^{k}-(a_k+v)=c_k+x.
    $$
    In other words, $a_{k-2}+v$ is the sibling of $a_{k-2}+x$ in $I_{k-2}$ if and only if $a_k+v$ is the sibling of $c_k+x$ in $I_k$.
\end{lemma}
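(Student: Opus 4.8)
The plan is to reduce both equations to a single closed form for $x$ in terms of $v$ and $k$, and to observe that Lemma~\ref{lemma_values_ac} makes the two closed forms coincide; the biconditional then follows at once. This converts the apparent ``iff'' into a routine substitution.

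First I would solve each side for $x$. The left equation $2^{k-2}-(a_{k-2}+v)=a_{k-2}+x$ is equivalent to $x = 2^{k-2}-2a_{k-2}-v$, and the right equation $2^{k}-(a_k+v)=c_k+x$ is equivalent to $x = 2^{k}-a_k-c_k-v$. Hence it suffices to show the two constants agree, i.e.\ that $2^{k-2}-2a_{k-2} = 2^{k}-a_k-c_k$.

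Next I would substitute the values from Lemma~\ref{lemma_values_ac}, namely $a_k = 2^{k-2}+a_{k-2}$ and $c_k = 2^{k-1}+a_{k-2}$. Then $a_k+c_k = 3\cdot 2^{k-2}+2a_{k-2}$ while $2^k = 4\cdot 2^{k-2}$, so $2^k-a_k-c_k = 2^{k-2}-2a_{k-2}$, which is exactly the constant occurring on the left. Thus both equations hold precisely when $x = 2^{k-2}-2a_{k-2}-v$, which establishes the equivalence.

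Finally, for the ``in other words'' reformulation I would unwind the definition of sibling: the left equation says exactly that $a_{k-2}+v$ and $a_{k-2}+x$ are siblings in $I_{k-2}$, and the right equation says $a_k+v$ and $c_k+x$ are siblings in $I_k$. The hypothesis $0 \le v < \lvert I_{k-2}\rvert$ places $a_{k-2}+v$ in $I_{k-2}$ and $a_k+v$ in $\mathcal{A}_k$ (which has length $\lvert I_{k-2}\rvert$), and Lemma~\ref{lemma_naf_symmetricity} guarantees the respective siblings return to $I_{k-2}$ and $I_k$, so the sibling language is legitimate. I do not expect any real obstacle here; the only point requiring mild care is checking these index ranges so that the ``sibling'' phrasing is meaningful, while the substance of the argument is the single arithmetic identity supplied by Lemma~\ref{lemma_values_ac}.
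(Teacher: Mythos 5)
Your proof is correct and takes essentially the same approach as the paper: both arguments rest entirely on the substitutions $a_k = 2^{k-2}+a_{k-2}$ and $c_k = 2^{k-1}+a_{k-2}$ from Lemma~\ref{lemma_values_ac}. The only difference is organizational --- the paper verifies the two implications separately via chains of equalities, whereas you solve both equations for $x$ and observe that the two closed forms coincide, which yields the biconditional in one step.
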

\begin{proof}
Let $2^{k-2}-(a_{k-2}+v)=a_{k-2}+x$. Then
       \begin{align*}
         2^k-(a_k+v)) 
         &= 2^k-(2^{k-2}+a_{k-2}+v)&\text{by Lemma~\ref{lemma_values_ac}} \\
        &= 2^{k-1}+(2^{k-2}-(a_{k-2}+v))  \\
        &= 2^{k-1}+(a_{k-2}+x))   &\text{by the assumption}\\
        &= c_k+x. &\text{by Lemma~\ref{lemma_values_ac}}
    \end{align*}
Conversely, let $2^{k}-(a_k+v)=c_k+x$. 
This implies $2^{k-1}-(a_k+v)=c_k-2^{k-1}+x$. So
\begin{align*}
    2^{k-2}-(a_{k-2}+v)
    &= 2^{k-1}-2^{k-2}-a_{k-2}-v \\
    &= 2^{k-1}-a_k-v &\text{by Lemma~\ref{lemma_values_ac}}\\
    &= c_k-2^{k-1}+x &\text{by the assumption}\\
    &= a_{k-2}+x. &\text{by Lemma~\ref{lemma_values_ac}}
\end{align*}
\end{proof}
\section{Degrees and leading coefficients of Stern polynomials on $I_k$}\label{section_deg_lc_stern}
In this section, we give a recursion expressing the degrees and leading coefficients of Stern polynomials of integers in $I_k$ in terms of the degrees and leading coefficients of Stern polynomials of integers in $I_{k-2}$ and $I_{k-1}$. 

The idea in this section is to express the Stern polynomial of $n \in I_k$ as a sum involving Stern polynomials of $2^a-r$ and $r$ so that $2^a-r$ (and perhaps $r$) are in $I_{k-1}$ or $I_{k-2}$. We then compare the degrees of these polynomials, which allows us to deduce the leading coefficient and degree of the sum.

We make use of a lemma by Schinzel \cite{Schinzel2011}, extended by Dilcher and Tomkins in \cite{dilcher10}, to obtain the needed expression.
\begin{lemma}\cite{Schinzel2011,dilcher10}\label{lemma_schinzel}
    Let $a, m,\text{ and } r$ be integers such that $0 \le r \le 2^a$. Then
    \begin{align*}
        B_{2^a m+r}&=B_{2^a -r}B_{m}+B_{r}B_{m+1} \text{, and}\\
        B_{2^a m-r}&=B_{2^a -r}B_{m}+B_{r}B_{m-1}.
    \end{align*}
\end{lemma}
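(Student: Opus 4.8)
The plan is to prove the first identity by induction on $a$ and then obtain the second as an immediate reindexing. Throughout I will use two consequences of Definition~\ref{def_stern_poly}: the doubling rule $t\cdot B_k = B_{2k}$, and the odd rule written in the form $B_{j-1}+B_j = B_{2j-1}$, which is the case $n=2j-1$ of the recursion. The base case $a=0$ is immediate: the constraint $0\le r\le 1$ leaves only $r=0$, where the claim reads $B_m = B_1 B_m + B_0 B_{m+1}$, and $r=1$, where it reads $B_{m+1}=B_0 B_m + B_1 B_{m+1}$; both hold since $B_1=1$ and $B_0=0$.

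For the inductive step I would assume the first identity for $a$ and all $0\le r\le 2^a$, and prove it for $a+1$ and all $0\le r\le 2^{a+1}$, splitting on the parity of $r$. If $r=2s$ with $0\le s\le 2^a$, then $B_{2^{a+1}m+r}=B_{2(2^a m+s)}=t\cdot B_{2^a m+s}$; applying the induction hypothesis to $B_{2^a m+s}$ and distributing the factor $t$ via $t B_{2^a-s}=B_{2^{a+1}-r}$ and $t B_s=B_r$ gives the claim. If $r=2s+1$ with $0\le s\le 2^a-1$, then $B_{2^{a+1}m+r}=B_{2^a m+s}+B_{2^a m+s+1}$; applying the induction hypothesis to both summands (legitimate since $s+1\le 2^a$) and regrouping, the coefficients of $B_m$ combine as $B_{2^a-s-1}+B_{2^a-s}=B_{2^{a+1}-r}$ and the coefficients of $B_{m+1}$ combine as $B_s+B_{s+1}=B_r$, both by the odd rule.

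The second identity then requires no further induction. Writing $2^a m - r = 2^a(m-1)+(2^a-r)$ and setting $r'=2^a-r$ so that $0\le r'\le 2^a$, the first identity applied with $m-1$ in place of $m$ and $r'$ in place of $r$ yields $B_{2^a m-r}=B_{2^a-r'}B_{m-1}+B_{r'}B_m$, and substituting back $r'=2^a-r$ gives exactly $B_{2^a m-r}=B_{2^a-r}B_m + B_r B_{m-1}$, provided $m\ge 1$ so that $B_{m-1}$ is defined.

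The main obstacle is bookkeeping rather than conceptual: one must track the ranges of $s$ carefully so that the induction hypothesis applies to each summand, in particular verifying $s+1\le 2^a$ in the odd case, and must correctly invoke the odd rule $B_{2j-1}=B_{j-1}+B_j$ to recombine the terms at $2^a-s$ and $2^a-s-1$ into a single term at $2^{a+1}-r$. Once these index identities are lined up, each case closes by direct substitution.
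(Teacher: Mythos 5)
Your proof is correct, and there is an important point of comparison here: the paper itself gives no proof of this lemma at all---it is imported as a cited result from Schinzel \cite{Schinzel2011} and Dilcher--Tomkins \cite{dilcher10}---so your self-contained induction from Definition~\ref{def_stern_poly} is a genuine addition rather than a rederivation of an internal argument. All the steps check out: the base case $a=0$ with $r\in\{0,1\}$ follows from $B_0=0$, $B_1=1$; in the even case $r=2s$ the doubling rule turns $t\,B_{2^a-s}$ and $t\,B_s$ into $B_{2^{a+1}-r}$ and $B_r$ respectively; in the odd case $r=2s+1$ the bound $s+1\le 2^a$ (forced by the fact that an odd $r$ satisfies $r\le 2^{a+1}-1$) makes both invocations of the induction hypothesis legitimate, and the odd rule $B_{2j-1}=B_{j-1}+B_j$ applied at $j=2^a-s$ and at $j=s+1$ recombines the coefficients of $B_m$ and $B_{m+1}$ exactly as claimed. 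Note also that since both parity cases invoke the hypothesis with the same $m$, the induction can even be run with $m$ fixed throughout. Your reduction of the second identity to the first via $2^am-r=2^a(m-1)+(2^a-r)$ is clean, and your caveat that $m\ge 1$ is required there is a real (implicit) hypothesis of the statement as quoted: for $m=0$ the index $m-1$ is negative and $B_{m-1}$ is undefined, so the lemma only makes sense when all indices appearing are nonnegative. In short, the proposal is complete and would serve as a valid proof of the cited lemma.
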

By applying Lemma~\ref{lemma_schinzel} with $m=1$ or $m=2$ (which forces the values of $a$ and $r$), a Stern polynomial may be expressed as a combination of the Stern polynomials $B_{2^a -r}$ and $B_{r}$, each multiplied by $1$, $t$, or $(t+1)$. 
Lemma~\ref{lemma_compare_degs} then allows us to compare the degrees of $B_{2^a-r}$ and $B_{r}$, and from that, obtain the degree and leading coefficient of the sums derived from Lemma~\ref{lemma_schinzel}. 
\begin{lemma}\label{lemma_compare_degs}
    Let $a\ge 1$, and let $0 \le s < a$. Let $r \in I_{a-s}$. Then 
    $$
        deg(B_{2^a-r}) = s+deg(B_{r}).
    $$
\end{lemma}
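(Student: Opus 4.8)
The plan is to fix the integer $r$ and write $k = a - s$, so that $r \in I_k$; I would then induct on $a$, proving $deg(B_{2^a-r}) = (a-k) + deg(B_r)$ for every $a \ge k$. Substituting $s = a-k$ recovers the stated formula, and the hypothesis $s < a$ is exactly $k \ge 1$, which holds for any $r$ possessing a reduced NAF. Before starting, I would record the one structural fact the whole argument rests on: every Stern polynomial has nonnegative coefficients. This is immediate from Theorem~\ref{theorem_ones_in_hb}, since each coefficient counts hyperbinary representations with a prescribed number of $1$s. Consequently, adding two Stern polynomials can never cancel top-degree terms, so $deg(B_p + B_q) = \max\!\big(deg(B_p), deg(B_q)\big)$; I will invoke this in the inductive step.

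For the base case $a = k$ (that is, $s = 0$), I want $deg(B_{2^k - r}) = deg(B_r)$. Here both $r$ and its sibling $2^k - r$ lie in $I_k$ by Lemma~\ref{lemma_naf_symmetricity}. By Corollary~\ref{cor_naf}, $deg(B_{2^k - r})$ is the number of $0$s in the reduced NAF of $r$, while $deg(B_r) = deg(B_{2^k - (2^k - r)})$ is the number of $0$s in the reduced NAF of $2^k - r$. The proof of Lemma~\ref{lemma_naf_symmetricity} displays these two NAFs as digit-wise negations of each other, sending each nonzero digit $n_i$ to $-n_i$ while fixing the positions of the $0$s; hence the two zero-counts coincide and the base case holds.

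For the inductive step I would assume the formula for some $a \ge k$ and apply the second identity of Lemma~\ref{lemma_schinzel} with exponent $a$ and $m = 2$, which is legitimate since $0 \le r < 2^k \le 2^a$. Using $B_2 = t$ and $B_1 = 1$ this gives $B_{2^{a+1} - r} = t\,B_{2^a - r} + B_r$. By the nonnegativity remark, $deg(B_{2^{a+1}-r}) = \max\!\big(1 + deg(B_{2^a - r}),\, deg(B_r)\big)$. The inductive hypothesis yields $deg(B_{2^a - r}) = (a-k) + deg(B_r) \ge deg(B_r)$, so the first term strictly dominates and $deg(B_{2^{a+1}-r}) = (a+1-k) + deg(B_r)$, completing the induction.

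I expect the main obstacle to be the base case rather than the recursion. The inductive step is essentially forced once one peels off a single factor of $2$ through Lemma~\ref{lemma_schinzel}, whereas the sibling equality $deg(B_{2^k - r}) = deg(B_r)$ genuinely requires both the NAF digit-negation symmetry and the degree-to-NAF translation supplied by Corollary~\ref{cor_naf}. The only other point demanding care throughout is verifying that no top-degree cancellation ever occurs, which the nonnegativity of Stern-polynomial coefficients guarantees at each sum.
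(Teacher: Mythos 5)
Your proof is correct, but it takes a genuinely different route from the paper's. The paper argues directly, with no induction: it carries out the subtraction $2^a-r$ explicitly in non-adjacent form (two displays, one for $s=0$ and one for $s>0$), checks that the resulting digit string is again a reduced NAF consisting of the negated digits of $r$ preceded by a block contributing exactly $s$ additional $0$s, and then translates zero-counts into degrees via the weight-distribution Theorem~\ref{theorem_weightdist}. You instead fix $r\in I_k$ and induct on $a\ge k$. Your base case $a=k$ is in substance the paper's $s=0$ case, except that you obtain the digit-negation symmetry by citing Lemma~\ref{lemma_naf_symmetricity} and Corollary~\ref{cor_naf} rather than redoing the subtraction. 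Your inductive step, however, replaces all of the paper's $s>0$ NAF arithmetic by the algebraic identity $B_{2^{a+1}-r}=t\,B_{2^a-r}+B_r$, i.e.\ Lemma~\ref{lemma_schinzel} with $m=2$ (the hypothesis $0\le r\le 2^a$ holds since $r<2^k\le 2^a$), combined with the observation that Stern polynomials have nonnegative coefficients, so that the degree of a sum is the maximum of the degrees. Both arguments are sound, and yours is non-circular: Theorem~\ref{theorem_ones_in_hb}, Lemma~\ref{lemma_naf_symmetricity}, Corollary~\ref{cor_naf}, and Lemma~\ref{lemma_schinzel} are all established independently of Lemma~\ref{lemma_compare_degs}. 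What your route buys: the inductive step needs no verification that a displayed digit string for general $s$ is a legitimate reduced NAF, and it states once, explicitly, the no-top-degree-cancellation principle that the paper relies on tacitly in Propositions~\ref{prop_degree_lc_A} through~\ref{prop_lc_B_new}; your use of Schinzel's identity with $m=2$ also parallels the paper's own later technique in Section~\ref{section_deg_lc_stern}, so it fits the paper's toolkit naturally. What the paper's route buys: it is direct (no induction on $a$), and it exhibits the reduced NAF of $2^a-r$ explicitly, which is structural information of independent interest beyond the degree count.
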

\begin{proof}
     $2^a$ has NAF-bitlength $a+1$ and $r$ has NAF-bitlength $a-s$.\vspace{.5em}
     \newline If $s=0$, $2^a-r$ may be expressed as 
         $$\begin{array}{lllllll}
              & 1 & 0 & 0 &  0 & \cdots &  0\\
            - &  & 1 & 0 &  r_{a-s-2} & \cdots &  r_0\\
            \hline
            \rule{0pt}{2.6ex}
              &  & 1 & 0 &  \overline{r_{a-s-2}} & \cdots & \overline{r_0}
         \end{array}$$
        with $r$ in non-adjacent form, and with $\overline{x}=-x$. By Theorem~\ref{theorem_weightdist}, the number of $0$s in $r$ is $deg(B_r)$. The result is in reduced non-adjacent form, and has the same number of $0$s as $r$. So again by Theorem~\ref{theorem_weightdist}, $deg(B_{2^a-r})=deg(B_r)$.\vspace{.5em}
        \newline If $s>0$, $2^a-r$ may be expressed as 
         $$\begin{array}{llllllllll}
              & 1 & 0 & \cdots & 0 &  0 & 0 &  0 & \cdots &  0\\
              - &  &  &   &  &  1 & 0 &  r_{a-s-2} & \cdots &  r_0\\
              \hline
             \rule{0pt}{2.6ex}
             & 1 & 0 & \cdots & 0 & \overline{1} & 0 & \overline{r_{a-s-2}} & \cdots & \overline{r_0}
         \end{array}$$
        with $r$ in non-adjacent form. By Theorem~\ref{theorem_weightdist}, the number of $0$s in $r$ is $deg(r)$. The result is in reduced non-adjacent form, and has $(a+1)-(a-s)-1 = s$ more $0$s than $r$. So again by Theorem~\ref{theorem_weightdist}, $deg(B_{2^a-r})=s+deg(B_r)$.
\end{proof}
Lemma~\ref{lemma_compare_degs} has as a corollary the symmetry of degrees of Stern polynomials in $I_k$. In other words, siblings in $I_k$ have Stern polynomials of equal degree.
\begin{corollary}\label{cor_naf_deg_symmetry}
    Let $k \ge 1$ and let $n \in I_k$. Then 
    $$
        deg(B_{2^k-n})=deg(B_n).
    $$
\end{corollary}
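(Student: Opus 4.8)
The plan is to recognize that this corollary is precisely the $s=0$ special case of Lemma~\ref{lemma_compare_degs}. First I would set $a = k$ and $s = 0$ in that lemma. Its hypotheses $a \ge 1$ and $0 \le s < a$ then reduce to $k \ge 1$, which is exactly the hypothesis of the corollary, and the requirement $r \in I_{a-s}$ becomes $r \in I_k$. Taking $r = n \in I_k$, the lemma immediately gives $deg(B_{2^k-n}) = 0 + deg(B_n) = deg(B_n)$, which is the claim. Nothing further is needed, since the proof of Lemma~\ref{lemma_compare_degs} already treats the $s=0$ branch separately, so the admissibility of $s=0$ is established there and requires no new verification.

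As an alternative route that makes the sibling symmetry explicit, I would combine Lemma~\ref{lemma_naf_symmetricity} with Corollary~\ref{cor_naf}. The proof of Lemma~\ref{lemma_naf_symmetricity} shows that the reduced NAF of $2^k-n$ is obtained from the reduced NAF of $n$ by negating each digit, whence $n$ and $2^k-n$ have the same number of $0$s in their reduced NAFs. Corollary~\ref{cor_naf} identifies the number of $0$s in the reduced NAF of $n$ with $deg(B_{2^k-n})$, and, applied to the sibling $2^k-n$, identifies the number of $0$s in its reduced NAF with $deg(B_{2^k-(2^k-n)}) = deg(B_n)$. Equating these two (equal) zero-counts yields $deg(B_{2^k-n}) = deg(B_n)$.

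Since the first approach is essentially a one-line substitution into an already-proved lemma, I expect no genuine obstacle. The only thing demanding any care is bookkeeping: confirming that $s = 0$ is an admissible value in Lemma~\ref{lemma_compare_degs} under the constraint $0 \le s < a$ (it is, as long as $a = k \ge 1$), and that the index $I_{a-s}$ collapses to $I_k$ for these parameters. In effect the entire content of the symmetry has been absorbed into the earlier degree-comparison lemma, so the corollary is a direct readout rather than a separate argument.
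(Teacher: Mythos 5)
Your first route is exactly the paper's own derivation: the paper states this corollary as an immediate consequence of Lemma~\ref{lemma_compare_degs}, i.e., the $s=0$, $a=k$, $r=n$ specialization, and your bookkeeping of the hypotheses ($k\ge 1$, $r\in I_k$) is correct. Your alternative argument via Lemma~\ref{lemma_naf_symmetricity} and Corollary~\ref{cor_naf} is also sound, but the primary approach is the one the paper takes, so nothing further is needed.
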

Lemma~\ref{lemma_compare_degs} requires consideration of integers in terms of their NAF-bitlengths. We have not found a similar relationship in terms of binary bitlengths. The partition $I_k = \mathcal{A}_k \cup \mathcal{B}_k \cup \mathcal{C}_k$ comes into play in the recursion, since $2^a -r$ and $r$ fall into different prior NAF-intervals, for $n$ in $\mathcal{A}_k$, $\mathcal{B}_k$ and $\mathcal{C}_k$.

\subsection{Degrees and leading coefficents of the Stern polynomial in $\mathcal{A}_k$ and $\mathcal{C}_k$}
The Stern polynomials for the integers in both $\mathcal{A}_k$ and $\mathcal{C}_k$ may be expressed as a combination of Stern polynomials of siblings in $I_{k-2}$. This follows by application of Schinzel's Lemma~\ref{lemma_schinzel} and the symmetricity of degrees of siblings.
\begin{proposition}\label{prop_degree_lc_A}
     Let $k \ge 3$, and let $0 \le v < \lvert I_{k-2}\rvert$. Then 
     \begin{align*}
        deg(B_{a_k+v}) &= deg(B_{a_{k-2}+v})+1 \text{, and } \\
        \ell c(B_{a_k+v}) &= \ell c(B_{a_{k-2}+v}).
     \end{align*}
\end{proposition}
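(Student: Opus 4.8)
The plan is to apply Schinzel's Lemma~\ref{lemma_schinzel} to the single integer $n = a_k + v$ and read off the degree and leading coefficient directly from the resulting two-term expression. First I would rewrite $n$ using Lemma~\ref{lemma_values_ac}: since $a_k = 2^{k-2} + a_{k-2}$, I have
$$
n = a_k + v = 2^{k-2} + (a_{k-2}+v).
$$
I would then set $a = k-2$, $m = 1$, and $r = a_{k-2}+v$, so that $n = 2^a m + r$. To invoke Lemma~\ref{lemma_schinzel} I must verify $0 \le r \le 2^a$; this holds because $r = a_{k-2}+v$ lies in $I_{k-2} = [a_{k-2},\, a_{k-2}+\lvert I_{k-2}\rvert)$ (as $0 \le v < \lvert I_{k-2}\rvert$), and every integer of NAF-bitlength $k-2$ is at most $\lfloor 2^{k-1}/3\rfloor$, which is strictly less than $2^{k-2}$.

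With this instantiation, and using $B_1 = 1$ and $B_2 = t$ from Definition~\ref{def_stern_poly}, Lemma~\ref{lemma_schinzel} gives
$$
B_{a_k+v} = B_{2^{k-2}-r}\,B_1 + B_r\,B_2 = B_{2^{k-2}-r} + t\,B_r ,
$$
where $r = a_{k-2}+v$. The key observation is that $2^{k-2}-r$ is exactly the sibling of $r$ in $I_{k-2}$, so by Lemma~\ref{lemma_naf_symmetricity} it again lies in $I_{k-2}$, and by the degree-symmetry Corollary~\ref{cor_naf_deg_symmetry} applied to $I_{k-2}$,
$$
deg(B_{2^{k-2}-r}) = deg(B_r).
$$

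This is the crux of the argument: because multiplying by $t$ raises the degree by one, the term $t\,B_r$ has degree $deg(B_r)+1$, strictly larger than $deg(B_{2^{k-2}-r}) = deg(B_r)$. Hence in the sum the leading term is contributed solely by $t\,B_r$, with no possibility of cancellation, and I would conclude $deg(B_{a_k+v}) = deg(B_r)+1 = deg(B_{a_{k-2}+v})+1$ together with $\ell c(B_{a_k+v}) = \ell c(t\,B_r) = \ell c(B_r) = \ell c(B_{a_{k-2}+v})$, which are the two claimed identities. The only real subtlety — and the step I would be most careful about — is confirming that the non-leading term $B_{2^{k-2}-r}$ has strictly smaller degree, since it is precisely the degree symmetry of siblings that lets the factor of $t$ push $t\,B_r$ above it; everything else is the bookkeeping of selecting the correct Schinzel instantiation and checking the range of $r$.
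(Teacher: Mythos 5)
Your proof is correct and takes essentially the same route as the paper's: the same instantiation of Schinzel's Lemma~\ref{lemma_schinzel} (with $m=1$ and $r=a_{k-2}+v$, via Lemma~\ref{lemma_values_ac}), followed by the sibling degree symmetry of Corollary~\ref{cor_naf_deg_symmetry} to conclude that the term $t\cdot B_{a_{k-2}+v}$ alone determines the degree and leading coefficient. You are in fact slightly more explicit than the paper, which omits the check that $0 \le r \le 2^{k-2}$ and leaves the final degree comparison as ``the theorem follows.''
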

\begin{proof}
Setting $a=2^{k-2},\ \ m=1,\text{ and } r=a_{k-2}+v$, we apply Lemma \ref{lemma_schinzel}:
    \begin{align*}
        B_{a_k+v} 
        &= B_{(2^{k-2}+a_{k-2})+v} &\text{by Lemma \ref{lemma_values_ac}}\\
        &= B_{2^{k-2}-(a_{k-2}+v)}+t\cdot B_{a_{k-2}+v}. &\text{by Lemma \ref{lemma_schinzel}}
    \end{align*}
Because of the symmetry of degrees of siblings in $I_k$ shown in Corollary~\ref{cor_naf_deg_symmetry}, $B_{a_{k-2}+v}$ and $B_{2^{k-2}-(a_{k-2}+v)}$ have the same degree. The theorem follows.
\end{proof}
\begin{proposition}\label{prop_degree_lc_C}
     Let $k \ge 3$, and let $0 \le v < \lvert I_{k-2}\rvert$. Then 
     \begin{align*}
        deg(B_{c_k+v}) &= deg(B_{2^{k-2}-(a_{k-2}+v)})+1 \text{, and } \\
        \ell c(B_{c_k+v}) &= \ell c(B_{2^{k-2}-(a_{k-2}+v)})+\ell c(B_{a_{k-2}+v}).
     \end{align*}
\end{proposition}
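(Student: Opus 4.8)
The plan is to mirror the proof of Proposition~\ref{prop_degree_lc_A}, but to take the Schinzel expansion at $m=2$ rather than $m=1$; this is exactly what forces the leading coefficient of $B_{c_k+v}$ to be a \emph{sum} of two leading coefficients rather than a single one. First I would rewrite the index using Lemma~\ref{lemma_values_ac}: since $c_k = 2^{k-1}+a_{k-2}$, I have $c_k+v = 2^{k-2}\cdot 2 + (a_{k-2}+v)$. Setting the exponent to $k-2$, $m=2$, and $r=a_{k-2}+v$ (and checking the hypothesis $0\le r\le 2^{k-2}$, which holds because $a_{k-2}+v < a_{k-1}\le 2^{k-2}$), the first identity of Lemma~\ref{lemma_schinzel} gives
\begin{align*}
  B_{c_k+v} &= B_{2^{k-2}-(a_{k-2}+v)}\,B_2 + B_{a_{k-2}+v}\,B_3 \\
  &= t\cdot B_{2^{k-2}-(a_{k-2}+v)} + (1+t)\,B_{a_{k-2}+v},
\end{align*}
using $B_2=t$ and $B_3=1+t$.

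Next I would compare the degrees of the two summands. Since $0\le v<\lvert I_{k-2}\rvert$, the integer $a_{k-2}+v$ lies in $I_{k-2}$, so by the degree symmetry of siblings (Corollary~\ref{cor_naf_deg_symmetry}) the polynomials $B_{a_{k-2}+v}$ and $B_{2^{k-2}-(a_{k-2}+v)}$ share a common degree $d:=\deg(B_{a_{k-2}+v})$. Multiplying by $t$ and by $(1+t)$ respectively raises each to degree $d+1$; moreover each multiplier contributes leading coefficient $1$, so the top-degree coefficient of $t\cdot B_{2^{k-2}-(a_{k-2}+v)}$ is $\ell c(B_{2^{k-2}-(a_{k-2}+v)})$ and that of $(1+t)B_{a_{k-2}+v}$ is $\ell c(B_{a_{k-2}+v})$. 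Both claimed formulas should then read off directly from the sum, provided the degree-$(d+1)$ terms do not cancel.

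The one point requiring care—and the main difference from Proposition~\ref{prop_degree_lc_A}, where the $t$-term strictly dominated in degree—is precisely that here the two summands have \emph{equal} degree $d+1$, so I must rule out cancellation of their leading terms. I expect this to be immediate from nonnegativity: by Theorem~\ref{theorem_ones_in_hb} (equivalently Theorem~\ref{theorem_weightdist}) every coefficient of a Stern polynomial counts a set of representations, hence is a nonnegative integer, and so the leading coefficients $\ell c(B_{2^{k-2}-(a_{k-2}+v)})$ and $\ell c(B_{a_{k-2}+v})$ are strictly positive. Consequently the degree-$(d+1)$ coefficients add rather than cancel, yielding $\deg(B_{c_k+v}) = d+1 = \deg(B_{2^{k-2}-(a_{k-2}+v)})+1$ and $\ell c(B_{c_k+v}) = \ell c(B_{2^{k-2}-(a_{k-2}+v)})+\ell c(B_{a_{k-2}+v})$, as required. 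As a consistency check on the degree claim one could alternatively route through the sibling correspondence of Lemma~\ref{lemma_sibs_across_k} together with Proposition~\ref{prop_degree_lc_A}, but the $m=2$ expansion has the advantage of delivering both statements at once, since it already produces the Stern polynomials $B_{2^{k-2}-(a_{k-2}+v)}$ and $B_{a_{k-2}+v}$ named in the conclusion.
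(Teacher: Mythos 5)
Your proof is correct and follows essentially the same route as the paper's: rewrite $c_k+v = 2\cdot 2^{k-2}+(a_{k-2}+v)$ via Lemma~\ref{lemma_values_ac}, apply Lemma~\ref{lemma_schinzel} with $m=2$ to get $B_{c_k+v}=t\cdot B_{2^{k-2}-(a_{k-2}+v)}+(t+1)\cdot B_{a_{k-2}+v}$, and invoke the sibling degree symmetry of Corollary~\ref{cor_naf_deg_symmetry}. Your explicit check that the two degree-$(d+1)$ leading terms cannot cancel (positivity of Stern polynomial coefficients, via Theorem~\ref{theorem_ones_in_hb}) is a detail the paper leaves implicit in its closing ``the theorem follows,'' and it is a sound addition rather than a deviation.
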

\begin{proof}
Setting $a=2^{k-2},\ \ m=2,\text{ and } r=a_{k-2}+v$, we apply Lemma \ref{lemma_schinzel}:
    \begin{align*}
        B_{c_k+v} 
        &= B_{(2^{k-1}+a_{k-2})+v}  &\text{by Lemma \ref{lemma_values_ac}}\\
        &= t\cdot B_{2^{k-2}-(a_{k-2}+v)}+(t+1)\cdot B_{a_{k-2}+v}. &\text{by Lemma \ref{lemma_schinzel}}
    \end{align*}
Because of the symmetry of degrees of siblings in $I_k$ shown in Corollary~\ref{cor_naf_deg_symmetry}, $B_{a_{k-2}+v}$ and $B_{2^{k-2}-(a_{k-2}+v)}$ have the same degree. The theorem follows.
\end{proof}
\subsection{Degrees and leading coefficents of the Stern polynomial in $\mathcal{B}_k$}
The Stern polynomials of the integers in $\mathcal{B}_k$ may be expressed as a combination of Stern polynomials of integers in $I_{k-1}$ and integers in $I_d$ with $d<{k-1}$. 
This follows by application of Schinzel's Lemma~\ref{lemma_schinzel} and the lemma comparing degrees of $B_{2^a-r}$ and $B_r$.

Lemma~\ref{lemma_translation_uw} is a translation lemma that allows us to 
express elements in $\mathcal{B}_k$ in the form of $2^{k-1} \pm w$, thus permitting easy application of Lemma~\ref{lemma_schinzel}. \begin{lemma}\label{lemma_translation_uw}
    Let $0 \le u \le 2^{k-1}-b_k$, and let $w=a_{k-2}-1-u$. Then $w \in I_m$, with $m<{k-2}$, and
    \begin{align*}
        a_{k-1}+u &= 2^{k-2}-w, \\
        b_k+u &= 2^{k-1}-w, \text{ and}\\
        2^k-(b_k+u) &= 2^{k-1}+w.\\
    \end{align*}
\end{lemma}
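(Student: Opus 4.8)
The plan is to reduce the entire lemma to two clean, parity-free closed forms,
\begin{align*}
    a_{k-1} &= 2^{k-2} - a_{k-2} + 1, \\
    b_k &= 2^{k-1} - a_{k-2} + 1,
\end{align*}
after which the three displayed equalities become one-line substitutions of $w = a_{k-2}-1-u$, and the interval membership $w \in I_m$ follows by tracking the range of $w$ as $u$ sweeps its allowed values. So the real content is establishing these two identities and then reading off the range of $w$.

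For $b_k$ I would start from the two-case formula of Lemma~\ref{lemma_values_bc} and feed in the value of $\lvert I_{k-2}\rvert$ from Lemma~\ref{lemma_length_naf}. The pleasant point is that the parity split in $b_k$ exactly cancels the parity split in $\lvert I_{k-2}\rvert$: when $k$ is even, $k-2$ is even and $\lvert I_{k-2}\rvert = a_{k-2}-1$, giving $b_k = 2^{k-1} - (a_{k-2}-1)$; when $k$ is odd, $k-2$ is odd and $\lvert I_{k-2}\rvert = a_{k-2}$, giving $b_k = 2^{k-1} - (a_{k-2}-1)$ once more. Thus the parity dependence disappears and $b_k = 2^{k-1} - a_{k-2} + 1$ in all cases. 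For $a_{k-1}$ I would instead use the symmetry of Lemma~\ref{lemma_naf_symmetricity}: the sibling of the smallest element $a_{k-1}$ of $I_{k-1}$ is its largest element $n_{k-2} = a_{k-1+1}-1 = a_k - 1$ (using $a_j = n_{j-1}+1$ from the proof of Lemma~\ref{lemma_interval_nafX}), so $2^{k-1} - a_{k-1} = a_k - 1$, and substituting $a_k = 2^{k-2} + a_{k-2}$ from Lemma~\ref{lemma_values_ac} yields $a_{k-1} = 2^{k-1} - a_k + 1 = 2^{k-2} - a_{k-2} + 1$.

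Granting the two identities, the three equalities are immediate. Substituting $w = a_{k-2}-1-u$ gives $2^{k-2} - w = (2^{k-2} - a_{k-2} + 1) + u = a_{k-1} + u$, which is the first equation, and likewise $2^{k-1} - w = (2^{k-1} - a_{k-2} + 1) + u = b_k + u$, the second. The third then follows by subtracting the second from $2^k$, since $2^k - (b_k+u) = 2^k - (2^{k-1}-w) = 2^{k-1} + w$.

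It remains to pin down the range of $w$ and deduce the NAF-bitlength claim, which I expect to be the only delicate point. Using $b_k = 2^{k-1} - a_{k-2} + 1$, the hypothesis $0 \le u \le 2^{k-1}-b_k$ becomes $0 \le u \le a_{k-2}-1$, so $w = a_{k-2}-1-u$ ranges over $[0,\, a_{k-2}-1]$. Since $a_{k-2} = n_{k-3}+1$ with $n_{k-3}$ the largest integer of NAF-bitlength $k-3$ (again by the computation in the proof of Lemma~\ref{lemma_interval_nafX}), every positive $w \le a_{k-2}-1 = n_{k-3}$ has NAF-bitlength at most $k-3$, so $w \in I_m$ for some $m \le k-3 < k-2$. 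The main obstacle is therefore purely bookkeeping: confirming that the parity cases collapse cleanly into the two uniform identities, and handling the single boundary value $w = 0$ (attained at $u = a_{k-2}-1$, i.e.\ at the midpoint $b_k+u = 2^{k-1}$), which lies outside every $I_m$ and must either be excluded from the stated range or treated as a degenerate endpoint.
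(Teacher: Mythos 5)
Your proof is correct and shares the paper's overall skeleton: reduce everything to the two parity-free identities $a_{k-1}=2^{k-2}-a_{k-2}+1$ and $b_k=2^{k-1}-a_{k-2}+1$, then read off the three equalities by substituting $w=a_{k-2}-1-u$. The difference is in how you obtain $b_k$: you combine the two-case formula of Lemma~\ref{lemma_values_bc} with the parity cases of Lemma~\ref{lemma_length_naf} and observe that the parities cancel, whereas the paper avoids parity altogether by noting that $b_k$ and $c_k-1$ are siblings in $I_k$, so $b_k=2^k-(c_k-1)$, and then applying Lemma~\ref{lemma_values_ac}. Your derivation of $a_{k-1}$ (the sibling of the smallest element of $I_{k-1}$ is its largest, $a_k-1$) is the same symmetry argument the paper uses, just run inside $I_{k-1}$ instead of $I_{k-2}$; note the subscript in ``$n_{k-2}$'' should read $n_{k-1}$, though your subsequent computation is right. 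The sibling route is marginally shorter since it bypasses the case analysis; yours has the advantage of using the lemmas as stated rather than re-deriving endpoint symmetry. Finally, your flagging of the endpoint $w=0$ (attained at $u=a_{k-2}-1$, i.e.\ $b_k+u=2^{k-1}$) is a genuine catch: the paper's proof simply asserts $w\in I_m$ from $w<a_{k-2}$, glossing over the fact that $0$ lies in no NAF-interval, so the membership claim really does require excluding or conventionally handling that degenerate endpoint, exactly as you say.
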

\begin{proof}
We have that $w \in I_m$, for $m<k-2$, since $w < a_{k-2}$.

By the definition of siblings in $I_k$, $a_{k-2}$ and $a_{k-1}-1$ are siblings, so
\begin{align}
    a_{k-1}-1 &= 2^{k-2}-a_{k-2}. &\text{by Lemma~\ref{lemma_naf_symmetricity}} \label{eq_ak}  
    \end{align}
    Likewise, $b_k$ and $c_k-1$ are siblings, so
    \begin{align}
    \nonumber b_k &= 2^k-(c_k-1) &\text{by Lemma~\ref{lemma_naf_symmetricity}}\\
    \nonumber &= 2^k-(2^{k-1}+a_{k-2}-1) &\text{by Lemma~\ref{lemma_values_ac}}\\
    &= 2^{k-1}-a_{k-2}+1. \label{eq_bk} 
\end{align} 
The results follow from these equations and the definition of $w$.
    \begin{align*}
        a_{k-1}+u &= a_{k-1}+a_{k-2}-1-w&\text{by the definition of $w$}\\
        &= (2^{k-2}-a_{k-2}+1)+a_{k-2}-1-w &\text{by Equation (\ref{eq_ak})}\\
        &= 2^{k-2}-w.  \\[.5em]
        b_k+u &= b_k+a_{k-2}-1-w &\text{by the definition of $w$}\\
        &= 2^{k-1}-w. &\text{by Equation (\ref{eq_bk})}\\[.5em]
        2^k-(b_k+u) &= 2^k-b_k-u\\
        &= 2^k-b_k-a_{k-2}+1+w &\text{by the definition of $w$} \\
        &= 2^k-(2^{k-1}+1)+1+w  &\text{by Equation (\ref{eq_bk})}\\
        &= 2^{k-1}+w.
    \end{align*}
\end{proof}
\begin{proposition}\label{prop_deg_B}
    Let $k\ge 3$ and let $0 \le u \le 2^{k-1}-b_k$. Then
    \begin{align*}
        deg(B_{b_k+u})&=deg(B_{a_{k-1}+u})+1, \text{ and}\\
        deg(B_{2^{k}-(b_k+u)})&=deg(B_{a_{k-1}+u})+1.
    \end{align*}
\end{proposition}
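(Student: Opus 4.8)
The plan is to reduce both equalities to the already-established degree-comparison Lemma~\ref{lemma_compare_degs}, after first rewriting the relevant integers in the form $2^a-r$ via the translation Lemma~\ref{lemma_translation_uw}. Throughout, set $w = a_{k-2}-1-u$, so that Lemma~\ref{lemma_translation_uw} gives $b_k+u = 2^{k-1}-w$, $a_{k-1}+u = 2^{k-2}-w$, and $2^k-(b_k+u) = 2^{k-1}+w$, with $w \in I_m$ for some $m<k-2$ whenever $w \ge 1$. Since $2^{k-1}-b_k = a_{k-2}-1$ (from Equation~(\ref{eq_bk})), the parameter $u$ ranges over $0 \le u \le a_{k-2}-1$, i.e. $w$ ranges over $0 \le w \le a_{k-2}-1$.

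First I would treat the generic case $w \ge 1$, where $w$ is a positive integer of NAF-bitlength $m$ with $1 \le m \le k-3$. Applying Lemma~\ref{lemma_compare_degs} with $a=k-1$ and $r=w\in I_m$ (so $s=(k-1)-m$, which satisfies $0 \le s < k-1$) yields $deg(B_{b_k+u}) = deg(B_{2^{k-1}-w}) = (k-1-m)+deg(B_w)$. Applying the same lemma with $a=k-2$ and the same $r=w\in I_m$ (so $s'=(k-2)-m$, again in range) yields $deg(B_{a_{k-1}+u}) = deg(B_{2^{k-2}-w}) = (k-2-m)+deg(B_w)$. Subtracting gives $deg(B_{b_k+u})-deg(B_{a_{k-1}+u}) = 1$, which is the first equality.

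For the second equality there are two routes. The quickest is to note that $b_k+u\in\mathcal{B}_k\subseteq I_k$, so the sibling symmetry of Corollary~\ref{cor_naf_deg_symmetry} immediately gives $deg(B_{2^k-(b_k+u)}) = deg(B_{b_k+u})$, and the first equality finishes it. The alternative, which parallels the leading-coefficient computations to come, expands $B_{2^{k-1}+w}$ by the plus version of Schinzel's Lemma~\ref{lemma_schinzel} with $m=1$: $B_{2^{k-1}+w} = B_{2^{k-1}-w}\,B_1 + B_w\,B_2 = B_{2^{k-1}-w}+t\,B_w$. Comparing degrees, $deg(B_{2^{k-1}-w}) = (k-1-m)+deg(B_w)$ strictly exceeds $deg(t\,B_w) = 1+deg(B_w)$ because $k-1-m \ge 2$ when $m \le k-3$; hence no cancellation occurs in the sum, and $deg(B_{2^k-(b_k+u)}) = deg(B_{2^{k-1}-w}) = deg(B_{b_k+u})$.

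Finally I would dispose of the boundary case $w=0$, which occurs exactly when $u=2^{k-1}-b_k$, i.e. $b_k+u = 2^{k-1}$ is the midpoint. Here Lemma~\ref{lemma_compare_degs} does not apply, since $B_0=0$ lies outside every NAF-interval, so I would compute directly from the definition: $B_{2^{k-1}} = t^{k-1}$ and $B_{2^{k-2}} = t^{k-2}$, giving $deg(B_{b_k+u}) = k-1 = (k-2)+1 = deg(B_{a_{k-1}+u})+1$, with the sibling statement following by symmetry. The only real subtlety, and the step I would be most careful about, is checking that the hypotheses of Lemma~\ref{lemma_compare_degs} ($0 \le s < a$, with $r$ in a genuine NAF-interval) are met for both applications and that the $w=0$ endpoint is handled separately; the strict degree inequality ruling out cancellation in the Schinzel sum is then immediate from $m \le k-3$.
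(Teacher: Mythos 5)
Your proof is correct, but it reaches the first equality by a genuinely different mechanism than the paper. The paper's proof expands $B_{b_k+u}=B_{2^{k-1}-w}$ via Schinzel's Lemma~\ref{lemma_schinzel} (with $m=2$, $r=w$) as $t\cdot B_{2^{k-2}-w}+B_w$, and then invokes Lemma~\ref{lemma_compare_degs} once, only to see that the summand $t\cdot B_{2^{k-2}-w}$ dominates; the degree increment of $1$ comes from the factor $t$ in that expansion. You bypass Schinzel's lemma entirely: you apply Lemma~\ref{lemma_compare_degs} twice, to $2^{k-1}-w$ and to $2^{k-2}-w$ against the same $r=w\in I_m$, and subtract, getting the increment $(k-1-m)-(k-2-m)=1$ directly. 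Both proofs then dispatch the sibling identity the same way, via Corollary~\ref{cor_naf_deg_symmetry}. Your route is more economical and self-contained for this particular proposition; the paper's route has the advantage that the identical Schinzel decomposition $B_{2^{k-1}-w}=t\cdot B_{2^{k-2}-w}+B_w$ is reused verbatim in the companion leading-coefficient result (Proposition~\ref{prop_lc_B_new}), so the two proofs run in parallel. One further point in your favor: you explicitly isolate the endpoint $u=2^{k-1}-b_k$, where $w=0$ lies in no NAF-interval and Lemma~\ref{lemma_compare_degs} is inapplicable, and settle it by computing $B_{2^{k-1}}=t^{k-1}$, $B_{2^{k-2}}=t^{k-2}$. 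The paper's proof silently includes this case (it is harmless there because $B_0=0$ drops out of the Schinzel expansion), but your treatment is the more careful one.
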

\begin{proof}
Let $w$ be as in Lemma~\ref{lemma_translation_uw}. By applying Lemma~\ref{lemma_schinzel}, with $a=2^{k-2}$, $m=2$, and $r=w$,
\begin{align*}
    deg(b_k+u) &= deg(B_{2^{k-1}-w}) &\text{by Lemma~\ref{lemma_translation_uw}} \\
    &= deg(t\cdot B_{2^{k-2}-w} +B_w) &\text{by Lemma~\ref{lemma_schinzel}}\\
    &= deg(B_{2^{k-2}-w})+1. &\text{by Lemma~\ref{lemma_compare_degs}}
\end{align*}
$2^{k}-(b_k+u)$ is the sibling of $b_k+u$, so by Corollary~\ref{cor_naf_deg_symmetry}, 
$$
    deg(B_{2^{k}-(b_k+u)})=deg(B_{a_{k-1}+u})+1.
$$
\end{proof}
\begin{proposition}\label{prop_lc_B_new}
    Let $k\ge 3$ and let $0 \le u \le 2^{k-1}-b_k$. Then
    \begin{align*}
        \ell c(B_{b_k+u})&=\ell c(B_{a_{k-1}+u}), \text{ and}\\
        \ell c(B_{2^{k}-(b_k+u)})&=\ell c(B_{a_{k-1}+u}).
    \end{align*}
\end{proposition}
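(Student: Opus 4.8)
The plan is to mirror the degree computation in Proposition~\ref{prop_deg_B}, but to track leading coefficients rather than only degrees. As before, I would introduce $w$ as in Lemma~\ref{lemma_translation_uw}, so that $w \in I_m$ with $m < k-2$, and then apply Schinzel's Lemma~\ref{lemma_schinzel} with power $2^{k-2}$, $m=2$, and $r=w$. The subtraction form yields $B_{b_k+u} = B_{2^{k-1}-w} = t\cdot B_{2^{k-2}-w}+B_w$, while the addition form yields $B_{2^k-(b_k+u)} = B_{2^{k-1}+w} = t\cdot B_{2^{k-2}-w}+(t+1)\cdot B_w$, using $B_2 = t$, $B_1 = 1$, and $B_3 = t+1$. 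Both right-hand sides share the same first summand $t\cdot B_{2^{k-2}-w}$.

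The crux of both equations is that this summand $t\cdot B_{2^{k-2}-w}$ strictly dominates the other summand in degree, so it alone determines the leading coefficient of the sum. To see this, set $s=(k-2)-m$ in Lemma~\ref{lemma_compare_degs}; since $m<k-2$ we have $s\ge 1$, and the lemma gives $deg(B_{2^{k-2}-w})=s+deg(B_w)$. Hence $deg(t\cdot B_{2^{k-2}-w})=1+s+deg(B_w)\ge 2+deg(B_w)$, which exceeds both $deg(B_w)$ (relevant to the first equation) and $deg((t+1)\cdot B_w)=1+deg(B_w)$ (relevant to the second). In either case no cancellation or contribution at the top degree can come from the second summand, so $\ell c(B_{b_k+u})=\ell c(B_{2^k-(b_k+u)})=\ell c(t\cdot B_{2^{k-2}-w})=\ell c(B_{2^{k-2}-w})$. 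Finally I would translate back: Lemma~\ref{lemma_translation_uw} gives $a_{k-1}+u=2^{k-2}-w$, so $B_{2^{k-2}-w}=B_{a_{k-1}+u}$ and both leading coefficients equal $\ell c(B_{a_{k-1}+u})$, as claimed.

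The main obstacle is making the strict degree-domination argument airtight, which is precisely where the leading-coefficient statement genuinely differs from its degree counterpart. In Proposition~\ref{prop_deg_B} the second equation came for free from the sibling-symmetry Corollary~\ref{cor_naf_deg_symmetry}; but leading coefficients are not in general preserved under the sibling map (as the sum appearing in Proposition~\ref{prop_degree_lc_C} already indicates), so here the second equation must be derived independently from the addition form of Lemma~\ref{lemma_schinzel}. A minor point to dispatch is the boundary value $w=0$, occurring at $u=2^{k-1}-b_k$, where $B_w=0$ and Lemma~\ref{lemma_compare_degs} does not literally apply; in that case the sum collapses to $t\cdot B_{2^{k-2}}$, whose leading coefficient is $\ell c(B_{2^{k-2}})=\ell c(B_{a_{k-1}+u})$, so the claim still holds.
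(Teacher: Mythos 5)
Your proof is correct and follows essentially the same route as the paper's: the same translation via Lemma~\ref{lemma_translation_uw}, the same application of Lemma~\ref{lemma_schinzel} with $a=2^{k-2}$, $m=2$, $r=w$, and the same degree-domination argument via Lemma~\ref{lemma_compare_degs} showing that $t\cdot B_{2^{k-2}-w}$ alone determines the leading coefficient of both sums. Your explicit handling of the boundary case $w=0$ (where $B_w=0$ and Lemma~\ref{lemma_compare_degs} does not literally apply) is a point of care the paper's proof passes over silently, but it does not alter the substance of the argument.
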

\begin{proof}
Let $w$ be as in Lemma~\ref{lemma_translation_uw}. By applying Lemma~\ref{lemma_schinzel}, with $a=2^{k-2}$, $m=2$, and $r=w$:
    \begin{align*}
    B_{b_k+u} &= B_{2^{k-1}-w} &\text{by Lemma~\ref{lemma_translation_uw}}\\ 
    &= t\cdot B_{2^{k-2}-w} +B_w. &\text{by Lemma~\ref{lemma_schinzel}}\\
    deg(t \cdot B_{2^{k-2}-w}) &> deg(B_w) &\text{by Lemma~\ref{lemma_compare_degs}}
\end{align*}
So by the above and Lemma~\ref{lemma_translation_uw},  $$ \ell c(B_{b_k+u})=\ell c(B_{2^{k-2} - w}) =\ell c(B_{a_{k-1}+u}).$$ 
Again applying Lemma~\ref{lemma_schinzel}, with $a=2^{k-2}$, $m=2$, and $r=w$:
    \begin{align*}
    B_{2^k-(b_k+u)} &= B_{2^{k-1} + w} &\text{by Lemma~\ref{lemma_translation_uw}}\\
        &= t \cdot B_{2^{k-2} - w} + (t+1)\cdot B_w. &\text{by Lemma~\ref{lemma_schinzel}}\\
    deg(t \cdot B_{2^{k-2}-w}) &> deg((t+1) \cdot B_w). &\text{by Lemma~\ref{lemma_compare_degs}}
    \end{align*}
So  by the above and Lemma~\ref{lemma_translation_uw},  
$$
    \ell c(B_{2^k-(b_k+u)}) 
    = \ell c(B_{2^{k-2} - w})
    = \ell c(B_{a_{k-1}+u}).
$$
\end{proof}
\subsection{Recursions on leading coefficients and degrees of Stern polynomials}
In this section, the recursions on the degrees and on the leading coefficients of $I_k$ are brought together. 

\begin{theorem}\label{theorem_stern_degs_rearranged}
    Let $k \ge 3$, and let $n \in I_k$. Let $0 \le v < \lvert I_{k-2}\rvert$, and let $0 \le u \le 2^{k-1}-b_k$. Then
    \begin{align*}
        deg(B_{a_k+v})&=deg(B_{a_{k-2}+v})+1,\\[.5em]
        deg(B_{b_k+u})&=
            deg(B_{a_{k-1}+u})+1, \\
        deg(B_{2^k-(b_k+u)})&=
            deg(B_{a_{k-1}+u})+1, \\[.5em]
        deg(B_{c_k+v})&=deg(B_{2^{k-2}-(a_{k-2}+v)})+1. 
    \end{align*}
\end{theorem}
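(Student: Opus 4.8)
The plan is to recognize that this theorem is a consolidation of the three degree formulas already established separately for the subintervals of the partition $I_k = \mathcal{A}_k \cup \mathcal{B}_k \cup \mathcal{C}_k$. Since that partition is disjoint and exhaustive, every $n \in I_k$ lies in exactly one subinterval, so each line of the theorem is simply the degree statement for one piece of the partition, and the whole proof amounts to citing the relevant proposition for each line after confirming that the index ranges fit together.

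First I would fix the parametrizations so that the stated ranges on $u$ and $v$ really do sweep out all of $I_k$. The subintervals $\mathcal{A}_k$ and $\mathcal{C}_k$ each have length $\lvert I_{k-2}\rvert$ by the partition definition, so their elements are exactly $a_k+v$ and $c_k+v$ for $0 \le v < \lvert I_{k-2}\rvert$, which matches the stated range on $v$. The middle subinterval $\mathcal{B}_k$ is symmetric about its midpoint $2^{k-1}$, so writing its lower half as $b_k+u$ for $0 \le u \le 2^{k-1}-b_k$ and reaching the upper half through the sibling $2^k-(b_k+u)$ covers all of $\mathcal{B}_k$; this is precisely why two separate lines indexed by $u$ appear.

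Next I would invoke the propositions directly. The first line, $deg(B_{a_k+v}) = deg(B_{a_{k-2}+v})+1$, is exactly Proposition~\ref{prop_degree_lc_A}. The second and third lines, $deg(B_{b_k+u}) = deg(B_{a_{k-1}+u})+1$ and $deg(B_{2^k-(b_k+u)}) = deg(B_{a_{k-1}+u})+1$, are the two statements of Proposition~\ref{prop_deg_B}. The fourth line, $deg(B_{c_k+v}) = deg(B_{2^{k-2}-(a_{k-2}+v)})+1$, is the degree half of Proposition~\ref{prop_degree_lc_C}. All the substantive machinery—splitting each Stern polynomial via Schinzel's Lemma~\ref{lemma_schinzel}, comparing degrees through Lemma~\ref{lemma_compare_degs}, and using the sibling degree symmetry of Corollary~\ref{cor_naf_deg_symmetry}—has already been discharged inside those propositions.

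Consequently there is no genuine analytic obstacle in this theorem; the only point requiring care is the bookkeeping, namely verifying that the ranges $0 \le v < \lvert I_{k-2}\rvert$ and $0 \le u \le 2^{k-1}-b_k$ assemble into the full interval $I_k$ with neither gaps nor overlaps. I expect that to be the main (though mild) difficulty, and I would settle it using the length relation $\lvert I_k\rvert = \lvert I_{k-1}\rvert + 2\lvert I_{k-2}\rvert$ of Lemma~\ref{lemma_length_naf} together with the explicit endpoints $a_k$, $b_k$, $c_k$ from Lemmas~\ref{lemma_values_ac} and \ref{lemma_values_bc}, so that the three subinterval lengths add up exactly to $\lvert I_k\rvert$.
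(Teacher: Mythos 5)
Your proposal is correct and matches the paper's own proof: the theorem is established exactly by citing Proposition~\ref{prop_degree_lc_A} for the $\mathcal{A}_k$ line, Proposition~\ref{prop_deg_B} for the two $\mathcal{B}_k$ lines, and Proposition~\ref{prop_degree_lc_C} for the $\mathcal{C}_k$ line. The extra bookkeeping you propose about the index ranges assembling into all of $I_k$ is sound but is not even spelled out in the paper, which treats the theorem purely as a consolidation of those propositions.
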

\begin{proof}
    The results on the integers in $\mathcal{A}_k$ and $\mathcal{C}_k$ follow from Propositions~\ref{prop_degree_lc_A} and \ref{prop_degree_lc_C}. The result on the integers in $\mathcal{B}_k$ follows from Proposition~\ref{prop_deg_B}.
\end{proof}
\begin{theorem}\label{theorem_stern_lcs_rearranged}
    Let $k \ge 3$, and let $n \in I_k$. Let $0 \le v < \lvert I_{k-2}\rvert$, and let $0 \le u \le 2^{k-1}-b_k$. Then
    \begin{align*}
        \ell c(B_{a_k+v})&=\ell c(B_{a_{k-2}+v}),\\[.5em]
        \ell c(B_{b_k+u})
        &=\ell c(B_{a_{k-1}+u}),\\
        \ell c(B_{2^{k}-(b_k+u)})&=\ell c(B_{a_{k-1}+u}),\\[.5em]
        \ell c(B_{c_k+v})&=\ell c(B_{2^{k-2}-(a_{k-2}+v)}) + \ell c(B_{a_{k-2}+v}).
    \end{align*}
\end{theorem}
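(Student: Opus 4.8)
The plan is to recognize that this theorem is purely an assembly of the three leading-coefficient results already established for the individual pieces of the partition $I_k = \mathcal{A}_k \cup \mathcal{B}_k \cup \mathcal{C}_k$, exactly mirroring the structure of the degree statement in Theorem~\ref{theorem_stern_degs_rearranged}. Since the partition is disjoint and exhaustive, every integer $n \in I_k$ is covered by exactly one of the four displayed identities, so it suffices to match each identity to a prior proposition and to confirm that the parametrizations $a_k+v$, $b_k+u$, $2^k-(b_k+u)$, and $c_k+v$ together tile $I_k$.

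First I would dispatch the $\mathcal{A}_k$ line: the identity $\ell c(B_{a_k+v}) = \ell c(B_{a_{k-2}+v})$ for $0 \le v < \lvert I_{k-2}\rvert$ is exactly the second conclusion of Proposition~\ref{prop_degree_lc_A}, so this line needs only a citation. Likewise, the $\mathcal{C}_k$ line $\ell c(B_{c_k+v}) = \ell c(B_{2^{k-2}-(a_{k-2}+v)}) + \ell c(B_{a_{k-2}+v})$ is precisely the second conclusion of Proposition~\ref{prop_degree_lc_C}. I would point out that this is the only line in which two leading coefficients are summed, since in the $\mathcal{C}_k$ expansion the two summands coming from Schinzel's Lemma~\ref{lemma_schinzel} have equal degree (by the sibling degree symmetry of Corollary~\ref{cor_naf_deg_symmetry}), whereas in every other case one summand strictly dominates and contributes the leading coefficient alone.

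For the two $\mathcal{B}_k$ lines, I would invoke Proposition~\ref{prop_lc_B_new}, which already supplies both $\ell c(B_{b_k+u}) = \ell c(B_{a_{k-1}+u})$ and $\ell c(B_{2^k-(b_k+u)}) = \ell c(B_{a_{k-1}+u})$ for $0 \le u \le 2^{k-1}-b_k$. The only point needing care is that $b_k+u$ sweeps the lower half of $\mathcal{B}_k$ while its sibling $2^k-(b_k+u)$ sweeps the upper half, with $2^{k-1}$ as the self-paired midpoint. This coverage is guaranteed by the translation Lemma~\ref{lemma_translation_uw} together with the endpoint computations of Section~5, which confirm that the two parametrizations cover $\mathcal{B}_k$ exactly and without overlap.

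I do not anticipate any genuine obstacle here, since all four identities are immediate consequences of results proved above; the whole content of the argument is bookkeeping. The one place a reader might want reassurance is the claim that the four parametrizations tile $I_k$ with no gap or double-count, but this follows directly from the definition of the partition in Section~5 and the symmetry of $\mathcal{B}_k$ about $2^{k-1}$ established in Lemma~\ref{lemma_naf_symmetricity} and Lemma~\ref{lemma_translation_uw}. Thus the theorem reduces to citing Propositions~\ref{prop_degree_lc_A}, \ref{prop_lc_B_new}, and \ref{prop_degree_lc_C}, precisely as Theorem~\ref{theorem_stern_degs_rearranged} cites their degree counterparts.
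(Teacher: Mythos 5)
Your proposal is correct and matches the paper's own proof, which likewise disposes of the theorem by citing Proposition~\ref{prop_degree_lc_A} for $\mathcal{A}_k$, Proposition~\ref{prop_lc_B_new} for $\mathcal{B}_k$, and Proposition~\ref{prop_degree_lc_C} for $\mathcal{C}_k$. Your added remarks --- that the $\mathcal{C}_k$ line alone sums two leading coefficients because its two Schinzel summands have equal degree by Corollary~\ref{cor_naf_deg_symmetry}, and that the four parametrizations tile $I_k$ --- are accurate glosses on facts already established in the cited propositions, not new obligations.
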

\begin{proof}
    The results on the integers in $\mathcal{A}_k$ and $\mathcal{C}_k$ follow from Propositions~\ref{prop_degree_lc_A} and \ref{prop_degree_lc_C}. The result on the integers in $\mathcal{B}_k$ follows from Proposition~\ref{prop_lc_B_new}.
\end{proof}

In \cite{Schinzel2016}, Schinzel proved a theorem expressing the leading coefficients of the Stern polynomial of a single integer $n$ in terms of its binary representation.
The recursion above, however, may be applied \emph{en masse} to the Stern polynomials of all the integers in $I_k$, facilitating comparison between them. 

\section{Optimal BSD representations of integers in $I_k$}\label{sec_weights_BSDs}
We can now apply the weight-distribution theorem to the recursions on Stern polynomials, and derive the number of optimal BSD representations of an integer $n$, along with their Hamming weight.

\begin{definition}[Optimal BSD representations]
    A BSD representation of $n$ is called \emph{optimal} if it has same Hamming weight as the reduced NAF representation of $n$. We refer to the number of optimal representations of $n$ as $M(n)$, and to the number of $0$s in an optimal representation as $Z(n)$.
\end{definition} 
\begin{theorem}\label{theorem_minimal_weight}
    Let $k \ge 3$, and let $n \in I_k$. Let $0 \le v < \lvert I_{k-2}\rvert$, and let $0 \le u \le 2^{k-1}-b_k$. 
    Then
    \begin{align*}
        Z(a_k+v)&=Z(a_{k-2}+v)+1,\\[.5em]
        Z(b_k+u)&=Z(a_{k-1}+u)+1 ,\\
        Z(2^k-(b_k+u))&= Z(a_{k-1}+u)+1, \\[.5em]
        Z(c_k+v)&=Z(2^{k-2}-(a_{k-2}+v))+1.
    \end{align*}
\end{theorem}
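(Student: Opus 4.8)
The plan is to reduce the four claimed identities to the degree recursion for Stern polynomials already established in Theorem~\ref{theorem_stern_degs_rearranged}, using the weight-distribution theorem as the bridge between $Z$ and $deg$. The essential observation is that $Z$ and $deg(B_\cdot)$ are the same quantity once everything is read at the NAF-bitlength.

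First I would record the key translation: for any integer $m$ lying in a NAF-interval $I_j$, the reduced NAF of $m$ is an optimal representation, so $Z(m)$ is the number of $0$s in that NAF. By Corollary~\ref{cor_naf} this number equals $deg(B_{2^j-m})$, and by the degree symmetry of siblings in Corollary~\ref{cor_naf_deg_symmetry} it equals $deg(B_m)$. Hence $Z(m)=deg(B_m)$ for every $m$ in a NAF-interval, and this is the identity I would apply to both sides of each of the four recursions.

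Next, for each of the three partition classes I would rewrite both sides through $Z(m)=deg(B_m)$ and invoke the matching line of Theorem~\ref{theorem_stern_degs_rearranged}, which supplies the $+1$ directly. For $a_k+v$ the companion argument $a_{k-2}+v$ lies in $I_{k-2}$; for $b_k+u$ the argument $a_{k-1}+u$ lies in $I_{k-1}$; and for $c_k+v$ the companion argument is the sibling $2^{k-2}-(a_{k-2}+v)$. The only points needing a moment's care are these sibling arguments: for the third line I must know that $2^k-(b_k+u)\in I_k$, and for the fourth that $2^{k-2}-(a_{k-2}+v)\in I_{k-2}$, so that $Z$ is defined there and equals the corresponding degree. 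Both membership facts follow from the NAF-symmetry Lemma~\ref{lemma_naf_symmetricity} (applied to $I_k$ and to $I_{k-2}$ respectively), after which the translation applies term by term.

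The main obstacle here is conceptual rather than computational: all the arithmetic heavy lifting was already done in Theorem~\ref{theorem_stern_degs_rearranged}, so the crux is pinning down the correct reading of $Z(n)$ as the number of $0$s in an optimal representation at the NAF-bitlength and confirming, via Corollary~\ref{cor_naf}, that this quantity is exactly $deg(B_{2^k-n})$. Once $Z(m)=deg(B_m)$ is in hand, each of the four equalities is a one-line substitution into the corresponding line of the degree recursion.
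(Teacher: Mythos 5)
Your proposal is correct and follows essentially the same route as the paper: both identify $Z(n)$ with $deg(B_{2^k-n})$ via the weight-distribution theorem (Corollary~\ref{cor_naf}), convert to $deg(B_n)$ by the sibling degree symmetry, and then read off each line from the degree recursion in Theorem~\ref{theorem_stern_degs_rearranged}. Your extra care about the sibling memberships $2^k-(b_k+u)\in I_k$ and $2^{k-2}-(a_{k-2}+v)\in I_{k-2}$ is a welcome precision the paper leaves implicit, but it does not change the argument.
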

\begin{proof}
    By Theorem~\ref{theorem_weightdist}, the number of $0$s in the NAF-representation of $n$ is the same as $deg(B_{2^k-n})$.
    By Lemma~\ref{lemma_compare_degs}, $deg(B_{2^k-n})=deg(B_n)$. The theorem follows by application of these results to Theorem~\ref{theorem_stern_degs_rearranged}, the result on the degree of a Stern polynomial.
\end{proof}
\begin{theorem}\label{theorem_optimal_BSDs}
    Let $k \ge 3$, and let $n \in I_k$. Let $0 \le v < \lvert I_{k-2}\rvert$, and let $0 \le u \le 2^{k-1}-b_k$. 
    Then
    \begin{align*}
        &M(a_k+v)=M(2^{k-2}-(a_{k-2}+v)) + M(a_{k-2}+v),\\[.5em]
        &M(b_k+u)=
           M(2^{k-1}-(a_{k-1}+u)), \\
        &M(2^k-(b_k+u))=M(2^{k-1}-(a_{k-1}+u)),  \\[.5em]
        &M(c_k+v)=M(a_{k-2}+v).
    \end{align*}
\end{theorem}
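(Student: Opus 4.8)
The plan is to mirror the proof of Theorem~\ref{theorem_minimal_weight}, replacing degrees by leading coefficients throughout and swapping in the leading-coefficient recursion of Theorem~\ref{theorem_stern_lcs_rearranged} in place of the degree recursion of Theorem~\ref{theorem_stern_degs_rearranged}. The bridge between the combinatorial quantity $M$ and the algebraic quantity $\ell c$ is the consequence of the weight-distribution Theorem~\ref{theorem_weightdist} that, for $n \in I_k$, the number of optimal representations equals the leading coefficient of the sibling's Stern polynomial, i.e. $M(n) = \ell c(B_{2^k-n})$. Applying this identity to a sibling pair gives $M(2^k-n) = \ell c(B_n)$, so passing to the sibling interchanges $M$ and $\ell c$. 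The whole proof is a matter of translating each row of Theorem~\ref{theorem_stern_lcs_rearranged} through this dictionary.

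First I would dispose of the middle block $\mathcal{B}_k$, which needs no cross-interval bookkeeping. By the bridge identity, $M(2^k-(b_k+u)) = \ell c(B_{b_k+u})$ and $M(b_k+u) = \ell c(B_{2^k-(b_k+u)})$; Theorem~\ref{theorem_stern_lcs_rearranged} equates both right-hand sides to $\ell c(B_{a_{k-1}+u})$. Since $a_{k-1}+u$ and $2^{k-1}-(a_{k-1}+u)$ are siblings in $I_{k-1}$, the bridge identity converts $\ell c(B_{a_{k-1}+u}) = M(2^{k-1}-(a_{k-1}+u))$, which is exactly the claimed value for both $M(b_k+u)$ and $M(2^k-(b_k+u))$.

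The blocks $\mathcal{A}_k$ and $\mathcal{C}_k$ are the delicate part, precisely because leading coefficients are \emph{not} sibling-symmetric: there is no analogue of the degree symmetry $deg(B_{2^k-n}) = deg(B_n)$ of Corollary~\ref{cor_naf_deg_symmetry} that made the degree version transparent. Here I would invoke Lemma~\ref{lemma_sibs_across_k} to locate the sibling of $a_k+v$ (resp.\ $c_k+v$) inside $\mathcal{C}_k$ (resp.\ $\mathcal{A}_k$). For $\mathcal{C}_k$: write $M(c_k+v) = \ell c(B_{2^k-(c_k+v)})$, use Lemma~\ref{lemma_sibs_across_k} to identify the sibling $2^k-(c_k+v)$ as $a_k+v'$ where $a_{k-2}+v' = 2^{k-2}-(a_{k-2}+v)$, apply the $\mathcal{A}_k$-row of Theorem~\ref{theorem_stern_lcs_rearranged} to get $\ell c(B_{a_k+v'}) = \ell c(B_{a_{k-2}+v'}) = \ell c(B_{2^{k-2}-(a_{k-2}+v)})$, and convert back via the bridge identity to $M(a_{k-2}+v)$. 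For $\mathcal{A}_k$ the analogous chain runs through the $\mathcal{C}_k$-row of Theorem~\ref{theorem_stern_lcs_rearranged}, whose right-hand side is a \emph{sum}: $M(a_k+v) = \ell c(B_{c_k+x})$ with $a_{k-2}+x = 2^{k-2}-(a_{k-2}+v)$ becomes $\ell c(B_{2^{k-2}-(a_{k-2}+x)}) + \ell c(B_{a_{k-2}+x}) = \ell c(B_{a_{k-2}+v}) + \ell c(B_{2^{k-2}-(a_{k-2}+v)})$, and the two summands convert to $M(2^{k-2}-(a_{k-2}+v))$ and $M(a_{k-2}+v)$ respectively, giving the stated sum.

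The main obstacle is exactly this index-chasing across the sibling correspondence of Lemma~\ref{lemma_sibs_across_k}: one must check that the sibling map $x \mapsto 2^{k-2}-(a_{k-2}+x)$ on $I_{k-2}$ is an involution, so that the composition of the bridge identity, the leading-coefficient recursion, and the conversion back to $M$ closes up and lands on the correct sibling. Once that is verified, everything else is substitution of the endpoint formulas from Lemmas~\ref{lemma_values_ac} and \ref{lemma_values_bc}, and I expect no new ideas beyond Theorems~\ref{theorem_weightdist} and \ref{theorem_stern_lcs_rearranged} together with Lemma~\ref{lemma_sibs_across_k}.
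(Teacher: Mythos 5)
Your proposal is correct and follows essentially the same route as the paper's own proof: the bridge identity $M(n)=\ell c(B_{2^k-n})$ from Theorem~\ref{theorem_weightdist}, the leading-coefficient recursion of Theorem~\ref{theorem_stern_lcs_rearranged} for all four cases, and the sibling correspondence of Lemma~\ref{lemma_sibs_across_k} (with your $v'$, $x$ playing the role of the paper's $x$) to handle $\mathcal{A}_k$ and $\mathcal{C}_k$, where leading coefficients lack the sibling symmetry that degrees enjoy.
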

\begin{proof}
    For the $M(a_k+v)$ and $M(c_k+v)$ cases, let $x$ be as in Lemma~\ref{lemma_sibs_across_k}.
   \begin{align*}
         M(a_k+v)
        &= \ell c(B_{2^k-(a_k+v)}) &\text{by Theorem~\ref{theorem_weightdist}} \\
        &= \ell c(B_{c_k+x})&\text{by Lemma~\ref{lemma_sibs_across_k}}\\
        &= \ell c(B_{a_{k-2}+x})+\ell c(B_{2^{k-2}-(a_{k-2}+x)}) &\text{by Theorem~\ref{theorem_stern_lcs_rearranged}}\\
        &= \ell c(B_{2^{k-2}-(a_{k-2}+v)}) + \ell c(B_{a_{k-2}+v})  &\text{by definition of $x$}\\
        &= M(a_{k-2}+v) + M(2^{k-2}-(a_{k-2}+v)).  &\text{by Theorem~\ref{theorem_weightdist}}\\[1em]
        M(c_k+v)
        &= \ell c(B_{2^k-(c_k+v)}) &\text{by Theorem~\ref{theorem_weightdist}} \\
        &= \ell c(B_{a_k+x}) &\text{by Lemma~\ref{lemma_sibs_across_k}} \\
        &= \ell c(B_{a_{k-2}+x}) &\text{by Theorem~\ref{theorem_stern_lcs_rearranged}}\\
        &= \ell c(B_{2^{k-2}-(a_{k-2}+v)})   &\text{by the the definition of $x$}\\
        &= M(a_{k-2}+v). &\text{by Theorem~\ref{theorem_weightdist}}
    \end{align*}
    The $M(b_k+u)$ and $M(2^k-(b_k+u))$ cases follow from Theorems~\ref{theorem_weightdist} and \ref{theorem_stern_lcs_rearranged}.
    \begin{align*}
        M(b_k+u)
        &= \ell c(B_{2^k-(b_k+u)})  &\text{by Theorem~\ref{theorem_weightdist}}\\
        &= \ell c(B_{a_{k-1}+u})  &\text{by Theorem~\ref{theorem_stern_lcs_rearranged}}\\
        &= M(2^{k-1}-(a_{k-1}+u)).  &\text{by Theorem~\ref{theorem_weightdist}}\\[.5em] 
        M(2^k-(b_k+u))
        &= \ell c(B_{2^k-(2^k-(b_k+u))})   &\text{by Theorem~\ref{theorem_weightdist}} \\
        &= \ell c(B_{b_k+u})  \\
        &= \ell c(B_{a_{k-1}+u})  &\text{by Theorem~\ref{theorem_stern_lcs_rearranged}}\\
        &= M(2^{k-1}-(a_{k-1}+u)). &\text{by Theorem~\ref{theorem_weightdist}}
    \end{align*}
 \end{proof}
Fig.~\ref{fig:charts} shows three examples of distributions of the number of optimal representations of integers in $I_k$, for $k=14, 15$ and $16$. Fig.~\ref{fig:chart_0} shows the number of $0$s in the optimal representations of integers in $I_{16}$. Here in the text, we will discuss Fig.~\ref{fig:charts}, and provide Fig.~\ref{fig:chart_0} for comparison.
 \begin{figure}[H]
 \hfill
    \begin{subfigure}[t]{.51\textwidth}
      \includegraphics[width=\textwidth]{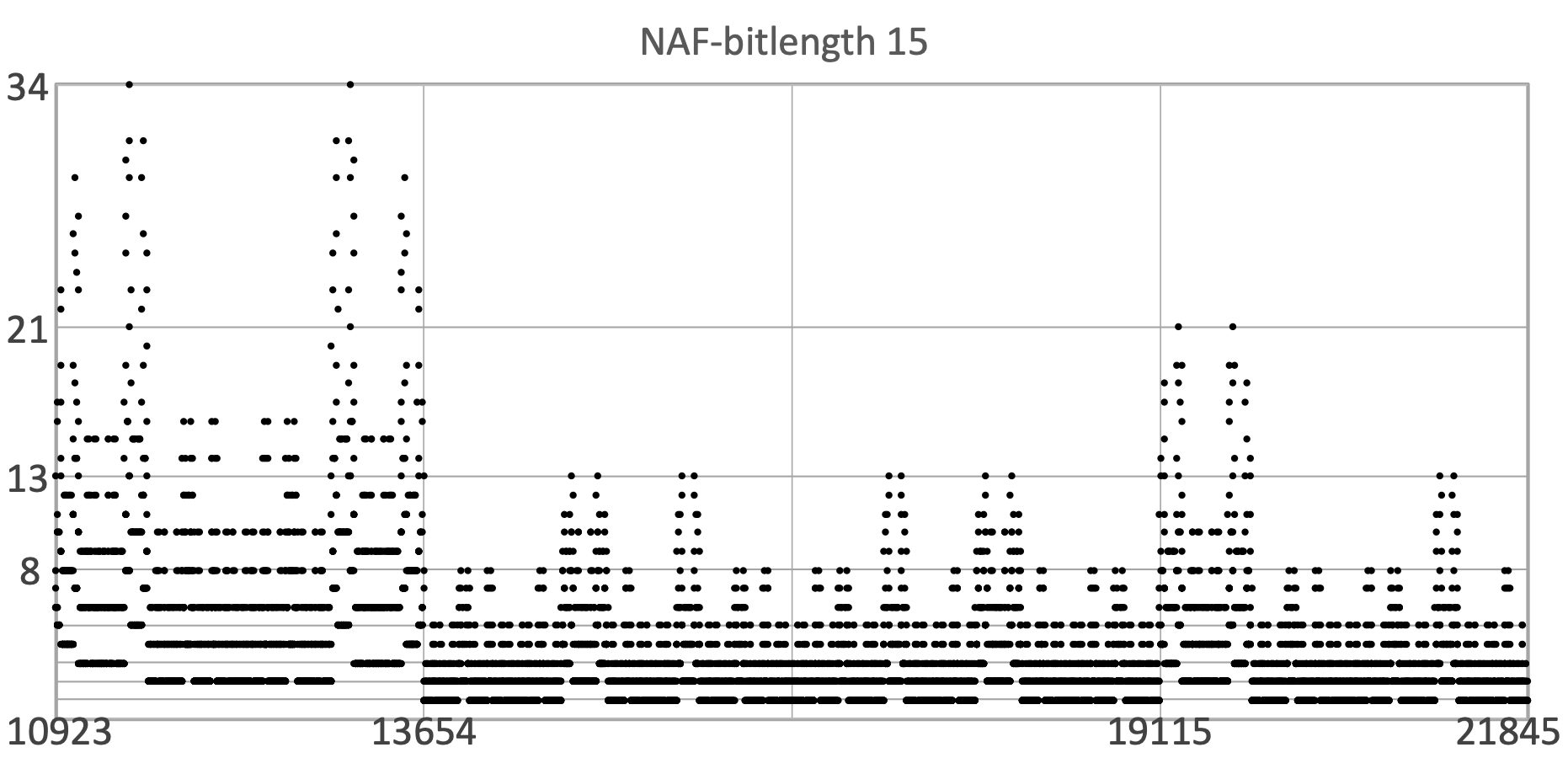}
      \caption{The number of optimal representations of integers in $I_{15}$.}
      \label{fig:charts_15}
    \end{subfigure}
    \begin{subfigure}[t]{.27\textwidth}
      \includegraphics[width=\textwidth]{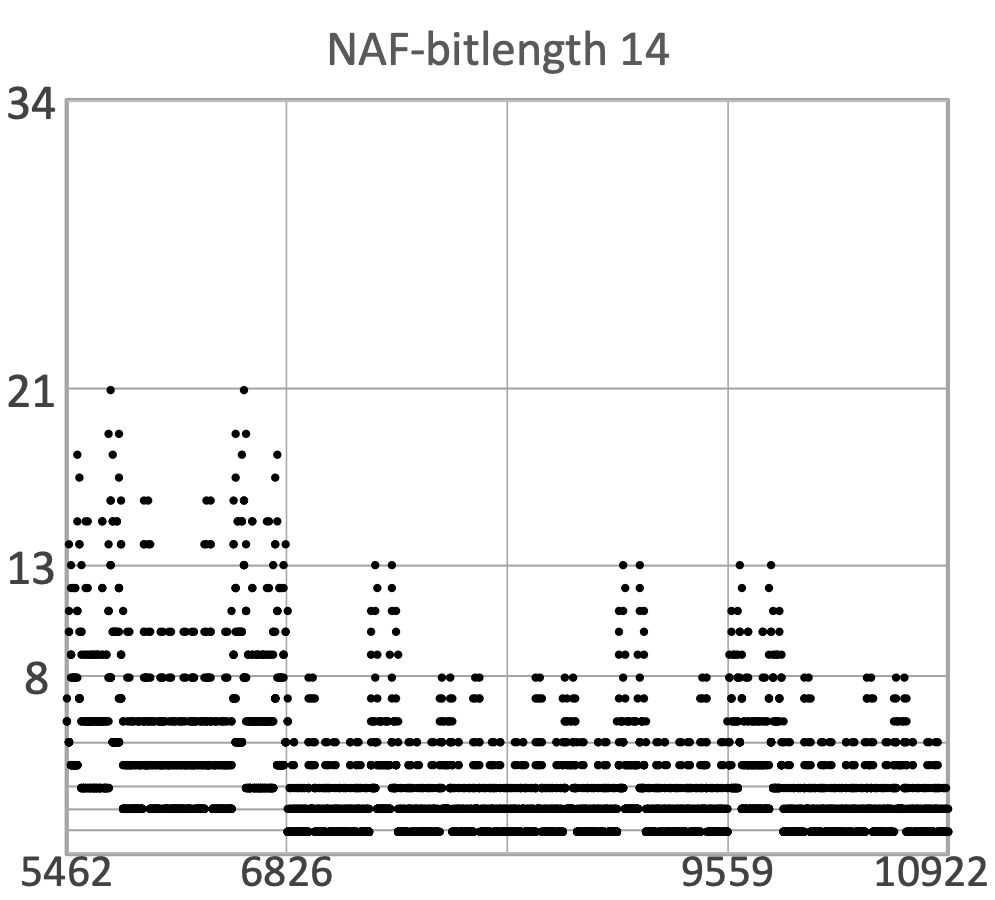}
      \caption{The number of optimal representations of integers in $I_{14}$.}
      \label{fig:charts_14}
    \end{subfigure}

    \begin{subfigure}[t]{1\textwidth}
      \includegraphics[width=\textwidth]{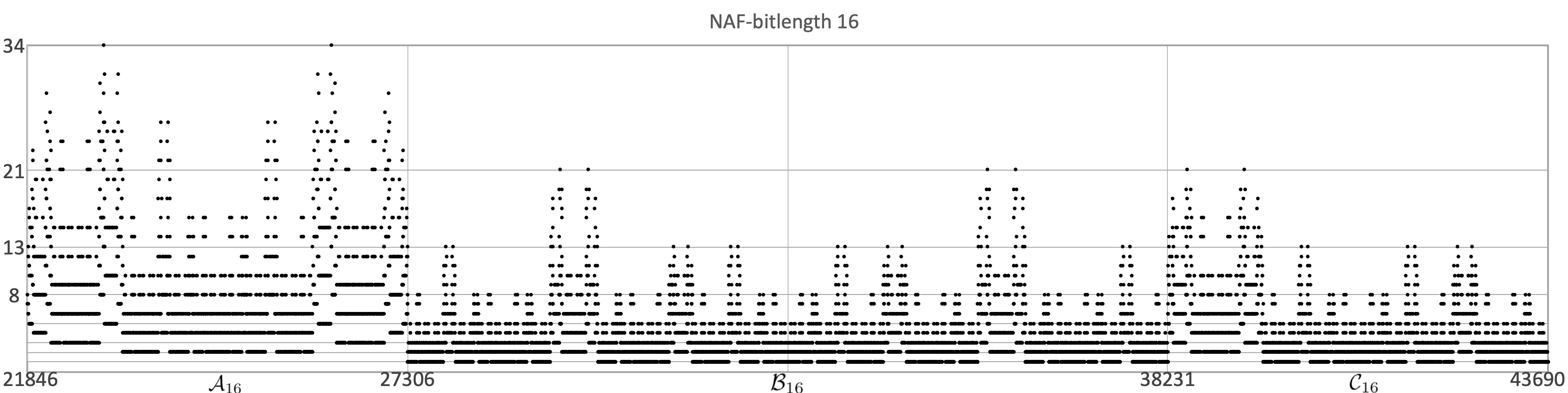}
      \caption{The number of optimal representations of integers in $I_{16}$. The distribution for $I_{14}$ from (\subref{fig:charts_14}) can be seen to be exactly that of $\mathcal{C}_{16}$ here, and the distribution in the second half of $I_{15}$  from (\subref{fig:charts_15}) can be seen to be exactly that of the second half of $\mathcal{B}_{16}$ here.}
      \label{fig:charts_16}
    \end{subfigure}
    \caption{The number of optimal representations of integers in $I_{14}$, $I_{15}$, and $I_{16}$. 
    }
\label{fig:charts}
 \end{figure}
The $x$-axes represent the integers in $I_k$. These axes are delineated at $a_k$, $b_k$ and $c_k$ and $2^k-1$.  
In Fig.~\ref{fig:charts}, the $y$ axes represent the number of representations of a given integer $n \in I_k$.The tick marks on the $y$-axes are the Fibonacci numbers, which are the maxima within a NAF-interval. 
The maxima have value $F_{\ceil{\frac{k}{2}}+1}$ and  are reached in $\mathcal{A}_k$, as discussed in \cite{ganesan2004,grabner06,sawada2007,Tuma_2015}. Relative maxima can also be seen within subintervals, and these are also Fibonacci numbers.

In general, the distributions of the number of optimal representations in $I_k$ for odd $k$ have similar shapes, as do the distributions for even $k$. 
One of the differences between odd and even NAF-bitlengths $k$ is that the relative maxima in $\mathcal{B}_k$ are $F_{\frac{k}{2}}$ for $k$ even, but are only $F_{\frac{k-1}{2}}$ for $k$ odd. 

The symmetries within $\mathcal{A}_k$ and $\mathcal{B}_k$ are evident. The $\mathcal{C}_k$ are not symmetric, but instead have the same values as $\mathcal{A}_{k-2}$, giving a fractal structure in the $\mathcal{C}_i$.

The distributions of the leading coefficients of the Stern polynomials $n \in I_k$ are easily obtained from the distributions of the number of optimal representations: by Theorem~\ref{theorem_weightdist}, the Stern polynomial distributions are those of the number of optimal representations, but reflected about the $I_k$-midpoint $x=2^{k-1}$.
 \begin{figure}[H]
 \centering
      \includegraphics[width=.98\textwidth]{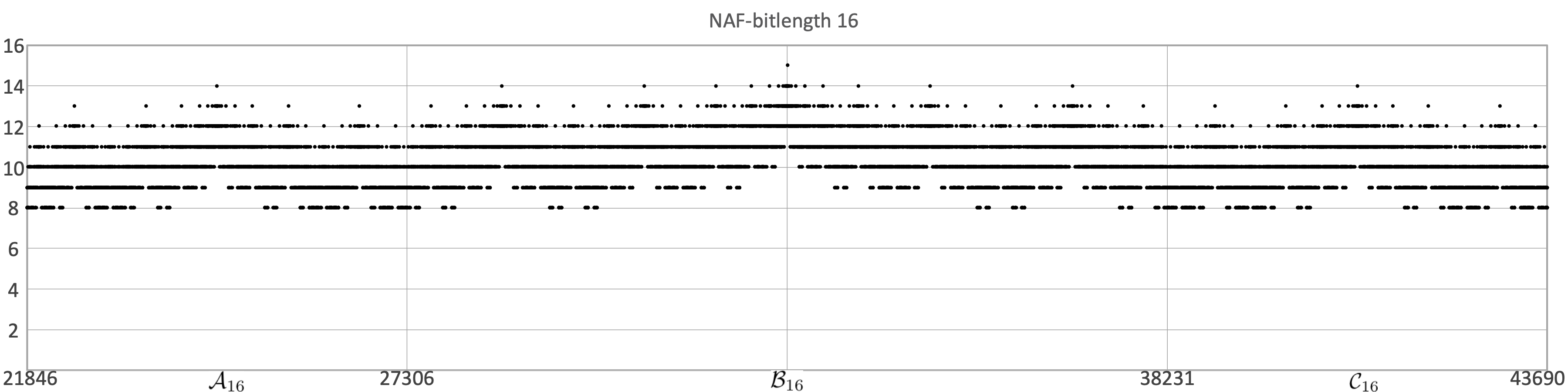}
      \caption{The number of $0$s in the optimal representations of integers in $I_{16}$.}
      \label{fig:chart_0_16}
\label{fig:chart_0}
 \end{figure}

\section{Algorithms}
We present in this section two algorithms counting the number of $k$-bit optimal BSD representations of integers in $I_k$, and giving the number of $0$s in such representations. 
They calculate for all the integers in $I_k$ recursively, depending on previously calculated values for the integers in $I_{k-1}$ and $I_{k-2}$.

They are quite simple. Algorithm~\ref{alg_stern_degs} does a copy and an increment for each $n$.   Algorithm~\ref{alg_stern_Ns} does only a single copy for three-quarters of the integers in $I_k$ and a two-term sum for the rest of $I_k$. Algorithm~\ref{alg_stern_Ns} is illustrated in Fig~\ref{fig:alg}. 

These algorithms are $\mathcal{O}(n)$, with $n \in I_k$. They are $\mathcal{O}(1)$ on average per integer, but must calculate for all $m<n$.
They are embarrassingly parallel within a NAF-interval $I_k$; however, the $i$-loop that calculates each NAF-interval must be performed sequentially, because of the NAF-interval dependency.

The advantage of these algorithms is that they permit comparison between all integers in $I_k$, since all values for $I_k$ are calculated in one loop iteration. It is often advantageous to work with integers of a given NAF-bitlength having a relatively large number of optimal representations, or else having optimal representations of relatively small weight. These algorithms allow one to choose between many integers having lengths, weights and optimal representations suited to the needs at hand.

In \cite{grabner06}, Grabner and Heuberger gave relations that count the number of optimal representations of an integer $n$ using transducers. In \cite{Schinzel2016}, Schinzel gave an method for calculating the leading coefficients of the Stern polynomial of an integer $n$, in terms of its binary decomposition. From this, the number of optimal representations of $n$ may be calculated from application of the weight-distribution Theorem~\ref{theorem_weightdist}.

Both of these are  $\mathcal{O}(\log(n))$ and would be algorithms of choice if one wanted the result for only one $n$. 
However, calculating for all $n \in I_k$, both would be  $\mathcal{O}(n\log(n))$. So if comparison is desired, the $\mathcal{O}(n)$ Algorithm~\ref{alg_stern_Ns} presented below is preferable.
\begin{algorithm}[H]\caption{Number of $0$s in optimal BSD representations}\label{alg_stern_degs}
\setstretch{1.25}
	\begin{algorithmic}[1] 
		\Function{Zeros-Opt-BSD}{$k, Z$} \funclabel{zeros-opt-bsd:k,Z}
		\State $Z[0] \gets 0$, $Z[1] \gets 0$, $Z[2] \gets 1$, $a_{i-2} \gets 0$, $a_{i-1} \gets 1$, $a_i \gets 2$
		\For{$i \gets 3, k$} 
		    \State $a_{i-2} \gets a_{i-1}$, $a_{i-1} \gets a_i$, $a_i \gets 2^{i-2}+a_{i-2}$
		    \Comment By Lemma~\ref{lemma_values_ac}
		    \State $loop\_length \gets 2^{i-1}-a_i$
		    \State $Z[2^{i-1}] \gets i-1$
		    \Comment By Theorem~\ref{theorem_minimal_weight} 
		    \For{$j \gets 0, loop\_length-1$} 
		    \Comment By Theorem~\ref{theorem_minimal_weight} 
		        \State $Z[a_i+j] \gets Z[a_{i-2}+j]+1$
		        \State $Z[2^i-(a_i+j)] \gets Z[a_{i-2}+j]+1$
		    \EndFor
        \EndFor
    \EndFunction
	\end{algorithmic}
\end{algorithm}
\begin{algorithm}[H]\caption{Number of optimal BSD representations}\label{alg_stern_Ns}
\setstretch{1.25}
	\begin{algorithmic}[1] 
		\Function{Num-Opt}{$k, M$} \funclabel{Num-Opts2:k,M}
		\State $M[0] \gets 0$,  $M[1] \gets 1$, $M[2] \gets 1$, $a_{0} \gets 0$, $a_{1} \gets 1$, $a_2 \gets 2$
		\For{$i \gets 3, k$} 
		    \State $a_{i-2} \gets a_{i-1}$, $a_{i-1} \gets a_i$, $a_i \gets 2^{i-2}+a_{i-2}$
		    \Comment By Lemma~\ref{lemma_values_ac}
		    \If {$i$ is even}
		        \State $ac\_loop\_length \gets a_{i-2}-1$
		    \Comment By Lemma~\ref{lemma_length_naf}
		        \State $b\_loop\_length \gets a_{i-2}$
		    \Comment By Lemma~\ref{lemma_values_bc}
		    \Else
		        \State $ac\_loop\_length \gets a_{i-2}$
		    \Comment By Lemma~\ref{lemma_length_naf}
		        \State $b\_loop\_length \gets a_{i-2}-1$
		    \Comment By Lemma~\ref{lemma_values_bc}
		    \EndIf
		    \State $b_i \gets a_{i}+ac\_loop\_length$
		    \Comment By $I_k$ partition
		    \State $c_i \gets 2^{i-1}+a_{i-2}$
		    \Comment By Lemma~\ref{lemma_values_ac}
		    \For{$h \gets 0, ac\_loop\_length-1$}
		    \Comment By Theorem~\ref{theorem_optimal_BSDs} 
		        \State $M[a_i+h] \gets M[a_{i-2}+h] + M[2^{i-2}-(a_{i-2}+h)]$
		        \State $M[c_i+h] \gets M[a_{i-2}+h]$
		    \EndFor
		    \State $M[2^{i-1}] \gets 1$
		    \Comment By Theorem~\ref{theorem_optimal_BSDs} 
		    \For{$j \gets 0, b\_loop\_length-1$} 
		    \Comment By Theorem~\ref{theorem_optimal_BSDs} 
		        \State $M[b_i+j] \gets M[2^{i-1}-(a_{i-1}+j)]$
		        \State $M[2^{i}-(b_i+j)] \gets M[2^{i-1}-(a_{i-1}+j)]$
		    \EndFor
        \EndFor
    \EndFunction
	\end{algorithmic}
\end{algorithm}
\newpage
 \begin{figure}[H]
   \centering
    \begin{subfigure}[t]{1\textwidth}
   \centering
       \includegraphics[width=.8\textwidth]{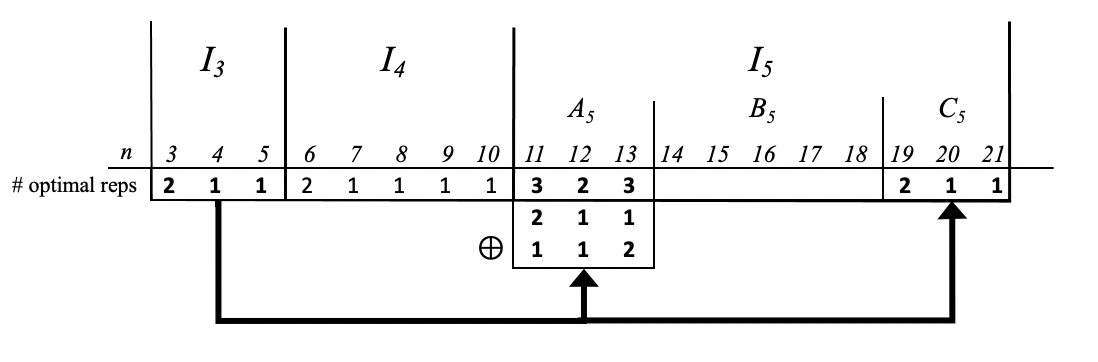}
      \caption{The number of optimal representations of integers in $\mathcal{A}_5$ and $\mathcal{C}_5$ are calculated in the $h$ loop. In general, the entries from $I_{k-2}$ in order are added to the entries of $I_{k-2}$ in reverse order, and the result is entered into $\mathcal{A}_k$. The entries of $I_{k-2}$ are entered into $\mathcal{C}_k$ in order.}
    \end{subfigure}
    \hfill
    \begin{subfigure}[t]{1\textwidth}
   \centering
      \includegraphics[width=.8\textwidth]{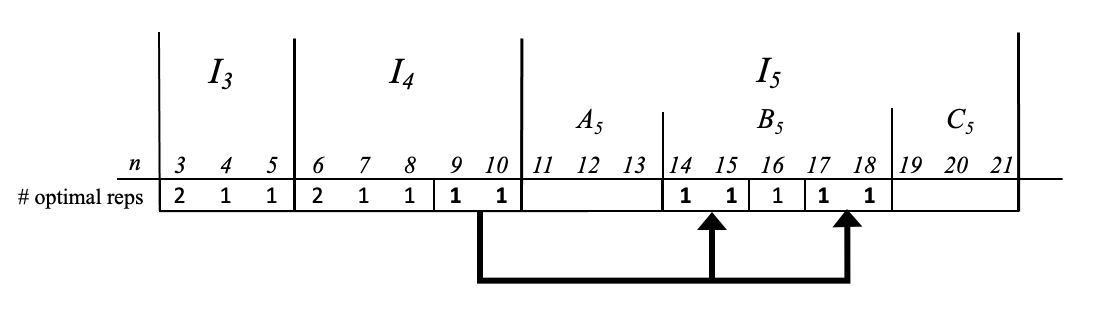}
      \caption{The number of optimal representations of integers in $\mathcal{B}_5$ are calculated in the $j$ loop. In general, the entries from  $I_{k-1}$ are entered into the first half of $\mathcal{B}_k$ in reverse order, and into the last half of $\mathcal{B}_k$ in order. (The reverse ordering is not evident in this small example, because the relevant entries of $I_4$ are palindromic.) The entry for $2^4$ is $1$.}
    \end{subfigure}
    \caption{An illustration of the $h$ and $j$ loops inside of the $i$ loop in Algorithm~\ref{alg_stern_Ns}, showing the calculation of the number of optimal BSD representations of $n \in I_5$. The calculation inside of the $i$ loop is embarassingly parallel. Every calculation is a copy from $I_3$, or the sum of two entries from $I_3$, or a copy from $I_4$.}
\label{fig:alg}
 \end{figure}

\section{Acknowledgements}
	The author wishes to thank the anonymous reviewers of a previous paper for helpful reference suggestions, and Andrew Alexander, Matthew Broussard, Vanessa Job and Nathan Kodama for much discussion. 

\bibliographystyle{spmpsci}
\bibliography{stern-diatomic}

\end{document}